\newtheorem{thm}[equation]{Theorem}
\newtheorem{prop}[equation]{Proposition}
\newtheorem{lem}[equation]{Lemma}
\newtheorem{cor}[equation]{Corollary}
\newtheorem{clm}[equation]{Claim}
\theoremstyle{definition}
\newtheorem{defn}[equation]{Definition}
\theoremstyle{remark}
\newtheorem{rem}[equation]{Remark}
\newtheorem{ex}[equation]{Example}
\newcommand{\N}{\ensuremath{\mathbb{N}}}
\newcommand{\R}{\ensuremath{\mathbb{R}}}
\newcommand{\tX}{{\tilde{X}}}
\newcommand{\tZ}{\tilde{Z}}
\newcommand{\mux}{\mu_{X}}
\newcommand{\muy}{\mu_{Y}}
\newcommand{\muz}{\mu_{Z}}
\newcommand{\seqn}[1]{ \{#1\}_{n=1}^{\infty} }
\DeclareMathOperator{\diam}{diam}
\DeclareMathOperator{\pt}{pt}
\newcommand{\niN}{{n\in\N}}
\newcommand{\nti}{{n\to\infty}}
\newcommand{\ep}{\varepsilon}
\renewcommand{\phi}{\varphi}
\newcommand{\cA}{\mathcal{A}}
\newcommand{\cK}{\mathcal{K}}
\newcommand{\cL}{\mathcal{L}}
\newcommand{\cM}{\mathcal{M}}
\newcommand{\cO}{\mathcal{O}}
\newcommand{\cP}{\mathcal{P}}
\newcommand{\cQ}{\mathcal{Q}}
\newcommand{\cX}{\mathcal{X}}
\newcommand{\cY}{\mathcal{Y}}
\newcommand{\proh}{d_{P}}
\DeclareMathOperator{\pr}{pr}
\title[Boundedness of precompact sets of metric measure spaces]{Boundedness of precompact sets of metric measure spaces}
\author[D. Kazukawa]{Daisuke Kazukawa}
\address{Department of Mathematics, Osaka University, Toyonaka, Osaka 560-0043, JAPAN}
\email{d-kazukawa@cr.math.sci.osaka-u.ac.jp}
\author[T. Yokota]{Takumi Yokota}
\address{Mathematical Institute, Tohoku University, Sendai 980-8578, JAPAN}
\email{takumiy@tohoku.ac.jp}
\keywords{mm-space, Box distance, Lipschitz order}
\begin{document}
\renewcommand{\thefootnote}{\fnsymbol{footnote}}
\footnote[0]{{\it Date}: May 1, 2021.}
\footnote[0]{2020 {\it Mathematics Subject Classification}. 51F30.}
\maketitle
\begin{abstract}
We give a detailed proof to Gromov's statement that precompact sets of metric measure spaces are bounded with respect to the box distance and the Lipschitz order.
\end{abstract}

\section{Introduction}

A \emph{metric measure space}, or an \emph{mm-space} for short, is a triple $X=(X, d_X, \mux)$,
where $\mux$ is a Borel probability measure on a complete separable metric space $(X, d_X)$.
Among other things, Gromov~\cite{G} introduced a distance function $\Box$, called the \emph{box distance},
and a partial order $\prec$, called the \emph{Lipschitz order}, on the set $\cX$ of all (isomorphism classes of) mm-spaces.
Our main reference is Shioya's book~\cite{S}.

For mm-spaces $X$ and $Y$,
we say that \emph{$X$ dominates $Y$} and write $Y\prec X$ if there exists a $1$-Lipschitz map $f:X\to Y$ with $f_* \mux=\muy$,
e.g.~\cite[Definition~2.10]{S}.

The purpose of this note is to give a detailed proof of the following statement, which was only sketched in \cite{G}.
\begin{thm}[Gromov~{\cite[3$\frac{1}{2}$.15.(f)]{G}}]\label{thm;main}
If $\cY\subset\cX$ is a precompact set in $(\cX, \Box)$,
then there exists an mm-space $X$ with $Y\prec X$ for any $Y\in\cY$.
\end{thm}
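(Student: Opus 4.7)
The plan is to (i) pass to a countable dense subfamily $\seqn{Y_n}$ of $\overline\cY$, (ii) construct $X$ by an iterative coupling of parameters of the $Y_n$ on the standard probability space $I := [0,1]$, and (iii) extend the domination property to all of $\cY$ via a tightness argument on graph couplings.

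By completeness of $(\cX,\Box)$ (Shioya), the closure $\overline\cY$ is compact and hence separable, so (i) is immediate. For (ii) I use the parameter description of $\Box$: each mm-space $Y$ is represented by a measurable \emph{parameter} $\phi:I\to Y$ with $\phi_*\mathrm{Leb}=\mu_Y$, and $\Box(A,B)\le \delta$ if and only if there exist parameters $\phi_A,\phi_B$ and a subset $I_0\subset I$ with $\mathrm{Leb}(I_0)\ge 1-\delta$ and $|\phi_A^*d_A-\phi_B^*d_B|\le\delta$ on $I_0\times I_0$. Using total boundedness of $\cY$, enumerate $\seqn{Y_n}$ so that $\delta_n:=\Box(Y_n,\{Y_1,\ldots,Y_{n-1}\})$ is summable (by interleaving increasingly fine finite $\ep$-nets of $\overline\cY$ with $\ep$ chosen small enough against the growth of net sizes). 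Then inductively construct parameters $\phi_n:I\to Y_n$ as follows. Given $\phi_1,\ldots,\phi_{n-1}$ with pseudometrics $\rho_k:=\phi_k^*d_{Y_k}$ and running max $\rho^{(n-1)}:=\max_{k<n}\rho_k$, pick $j<n$ realizing $\delta_n$; by the parameter description, there exist parameters $\phi_j',\phi_n'$ of $Y_j,Y_n$ on $I$ with $|\phi_j'^*d_{Y_j}-\phi_n'^*d_{Y_n}|\le\delta_n$ on $I_0\times I_0$ for some $I_0$ with $|I_0|\ge 1-\delta_n$. Since $\phi_j',\phi_j$ are both parameters of $Y_j$, they differ by a measure-preserving self-map $T:I\to I$; set $\phi_n:=\phi_n'\circ T^{-1}$. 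Then $|\rho_n-\rho_j|\le\delta_n$ on $T(I_0)\times T(I_0)$, and since $\rho_j\le\rho^{(n-1)}$, we get $\rho_n\le\rho^{(n-1)}+\delta_n$ on that product set (of measure $\ge(1-\delta_n)^2$). Setting $X_n:=(I,\rho^{(n)},\mathrm{Leb})$ with $\rho^{(n)}:=\max(\rho^{(n-1)},\rho_n)$, one has $\Box(X_n,X_{n-1})\le\delta_n$, so $\seqn{X_n}$ is $\Box$-Cauchy and converges to some $X\in\cX$ by completeness. Borel--Cantelli, applied to the exceptional product sets of summable measure, yields that $\rho^{(\infty)}:=\sup_n\rho_n$ is a.e.\ finite on $I\times I$, and $X$ may be identified with $(I,\rho^{(\infty)},\mathrm{Leb})$ (separability being inherited from $X\in\cX$). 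Each $\phi_n$ is then a $1$-Lipschitz measure-preserving map $X\to Y_n$, so $Y_n\prec X$.

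For (iii), pick $Y\in\cY$ and a sequence $Y_{n_k}\to Y$ in $\Box$. Embed $Y$ and all $Y_{n_k}$ isometrically into a common Polish space $Z$ with $\mu_{Y_{n_k}}\to\mu_Y$ weakly (Gromov--Prokhorov). The couplings $G_k:=(\mathrm{id}_X,\phi_{n_k})_*\mu_X$ on $X\times Z$ have tight marginals, so a subsequence converges weakly to some $G_\infty$. Because each $\phi_{n_k}$ is $1$-Lipschitz, $G_k^{\otimes 2}$ sits in the closed set $R:=\{((x,z),(x',z')):d_Z(z,z')\le d_X(x,x')\}$, and so does $G_\infty^{\otimes 2}$. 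Specializing to $x=x'$ forces $\mathrm{supp}(G_\infty)$ to meet each fiber $\{x\}\times Z$ in at most one point, so $G_\infty$ is the graph of a $1$-Lipschitz measurable $f:X\to Y$ with $f_*\mu_X=\mu_Y$, giving $Y\prec X$. The main obstacle is step (ii): arranging the enumeration of $\seqn{Y_n}$ together with the iterative coupling so that $\sum\delta_n<\infty$, which simultaneously guarantees that $\rho^{(\infty)}$ is a.e.\ finite and that $X=\lim X_n$ is a bona fide mm-space in $\cX$; the crucial technical move is the measure-preserving reparametrization of $I$ at each step that aligns the new coupling with the already-fixed $\phi_j$.
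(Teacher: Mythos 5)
Your overall architecture (build a Cauchy sequence of approximating spaces, pass to the $\Box$-limit, recover honest $1$-Lipschitz dominations by a compactness argument on couplings) matches the paper's, and your step (iii) is essentially Lemma~\ref{lem;final} (cf.\ \cite[Lemma~4.39]{S}). The fatal problem is step (ii), and it is exactly the central difficulty of the theorem. Your construction charges a cost of order $\delta_n=\Box(Y_n,\{Y_1,\dots,Y_{n-1}\})$ per new space, both in the increment $\Box(X_{n-1},X_n)$ and in the measure of the exceptional set where $\rho_n\le\rho^{(n-1)}+\delta_n$ fails, so you genuinely need $\sum_n\delta_n<\infty$. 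Your recipe for arranging this is circular: the contribution of the $k$-th net $N_k$ to $\sum_n\delta_n$ is of order $\#N_k\cdot\ep_{k-1}$, and $\#N_k$ is determined only after $\ep_k<\ep_{k-1}$ is chosen, so $\ep_{k-1}$ cannot be chosen ``against'' it. Worse, no enumeration works in general: if $\overline\cY$ contains a bi-Lipschitz copy of $[0,1]$ (e.g.\ $Y_t$ the two-point space with gap $1+t$ and uniform measure, $t\in[0,1]$, for which $\Box(Y_s,Y_t)\ge c\min(|s-t|,1)$), then \emph{every} dense sequence $(t_n)$ in $[0,1]$ has $\sum_n d(t_n,\{t_1,\dots,t_{n-1}\})=\infty$; indeed the greedy-insertion edges containing a generic point are nested, a finite nested chain forces that point to be the exact midpoint of the innermost edge, and density then forces a later edge inside it --- so the multiplicity is infinite a.e.\ and the total edge length diverges. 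Without summability, $\rho^{(\infty)}=\sup_n\rho_n$ has no reason to be finite anywhere (a $\Box$-precompact family has uniformly bounded diameters only off sets of small measure), and $(X_n)$ has no reason to be Cauchy. Two secondary gaps: two parameters $\phi_j,\phi_j'$ of the same mm-space need not differ by an \emph{invertible} measure-preserving map (nor even by a factor map in the needed direction when the conditional measures over $Y_j$ have different atomic structure), so ``$\phi_n'\circ T^{-1}$'' is undefined and you need a lemma in the spirit of fixing one parameter in the definition of $\Box$; and the identification of the $\Box$-limit with $(I,\rho^{(\infty)},\mathrm{Leb})$, including its separability, requires an argument.

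The paper's proof is engineered precisely to avoid the $\#N_k\cdot\ep_{k-1}$ blow-up: at stage $n$ it treats all of $\cY$ simultaneously through a finite net $\cY_n$ whose neighborhoods cover $\cY$, and the increment $\Box(X_n,X_{n+1})\le 48\ep_n$ depends only on the pre-chosen summable sequence $(\ep_n)$, not on the cardinality of $\cY_n$ or of the partitions $\cA_{Y,n}$. This works because measure errors are controlled \emph{multiplicatively} piece by piece on a refining partition, so that summed over all pieces they contribute at most $\ep_n$ to the Prohorov distance (the remark after Lemma~\ref{lem;P}); because the mass missed by the refined partition is measured against the smallest piece of the coarse one (hypothesis $(4)$ of Lemma~\ref{lem;key}); and because the pushforward measures are repaired to be exactly correct at every stage (Lemma~\ref{lem;X'}), so no error is carried forward. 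Any rescue of your parameter-based picture would have to import some such piece-by-piece multiplicative bookkeeping so that the per-stage cost is independent of the size of the net.
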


In Theorem~\ref{thm;main},
the product of spaces in $\cY$ does the job if $\cY$ is a finite set,
cf.~\cite[Proposition~4.57]{S},
but we do not know such a simple construction of $X$ in general.

\begin{ex}\label{ex;precpt}
Let $K\in\R$, $N\in\N$, and $D>0$.
The set $\cX^D_{\le N}$ of finite mm-spaces with cardinality $\le N$ and diameter $\le D$ is a typical example of $\Box$-compact sets,
e.g.~\cite[Theorem~4.27]{S}.
The set $\cM(K, N, D)$ of closed Riemannian manifolds with Ricci curvature $\ge K$, dimension $\le N$, and diameter $\le D$ is a measured GH precompact and hence $\Box$-precompact,
cf.~\cite[Remarks~4.32, 4.34]{S}.
Here and hereafter, GH stands for Gromov--Hausdorff.
\end{ex}

We remark that the \emph{pyramid}
\begin{align}\label{eq;pyramid}
\cP_X := \{ Y\in\cX : Y\prec X \}
\end{align}
associated with an mm-space $X$ is a compact set in $(\cX, \Box)$,
e.g.~\cite[Chapter~6]{S},
and the converse of Theorem~\ref{thm;main} also holds,
cf.~Corollary~\ref{cor;main}.

For metric spaces $X$ and $Y$,
we write $Y\prec X$ if there exists a $1$-Lipschitz surjection $f:X\to Y$.
The following is a byproduct of this work and a GH analog of Theorem~\ref{thm;main}.
\begin{prop}\label{prop;GH}
If a set $\cY$ of compact metric spaces is GH-precompact,
then there exists a compact metric space $X$ with $Y\prec X$ for any $Y\in\cY$.
\end{prop}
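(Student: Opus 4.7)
\emph{Reduction.} I would first reduce to the case where $\cY$ is countable. Since $\cY$ is GH-precompact, its GH-closure $\overline{\cY}$ is GH-compact and hence separable; fix a countable dense $\{Y_n\}\subset\overline{\cY}$. It then suffices to produce a compact $X$ with $Y_n\prec X$ for every $n$, because the domination relation $\prec$ is stable under GH-limits, as explained next, so we obtain $Y\prec X$ for every $Y\in\overline{\cY}\supset\cY$.

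\emph{Stability of $\prec$ under GH-limits.} Suppose $Y_n\to Y$ in GH and $f_n\colon X\to Y_n$ are $1$-Lipschitz surjections. Realising $\{Y_n\}\cup\{Y\}$ as isometric subsets of a single compact metric space $Z$ in which $Y_n\to Y$ in Hausdorff distance, the maps $f_n$ become equi-$1$-Lipschitz maps $X\to Z$ and subconverge uniformly, by Arzel\`a--Ascoli, to a $1$-Lipschitz $f\colon X\to Z$. Hausdorff convergence of $Y_n$ to $Y$ in $Z$ forces $f(X)\subset Y$, and surjectivity is inherited: for $y\in Y$ pick $y_n\in Y_n$ with $y_n\to y$, lift to $x_n\in f_n^{-1}(y_n)$, and extract a convergent subsequence $x_n\to x$ in the compact space $X$ to obtain $f(x)=y$. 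Hence $Y\prec X$.

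\emph{Construction of $X$ for a countable family.} By GH-precompactness, the family $\{Y_n\}$ embeds isometrically into a single compact $Z$. A natural candidate is a ``hedgehog'' $X:=Z\sqcup\bigsqcup_n\widetilde{Y}_n$, attaching an isometric copy $\widetilde{Y}_n$ of $Y_n$ to $Z$ at height $r_n\searrow 0$, with $d(\tilde y,z):=d_Z(y,z)+r_n$ for $\tilde y\in\widetilde{Y}_n$ lying above $y\in Y_n\subset Z$, and distances between distinct copies routed through $Z$. One checks that $X$ is totally bounded and complete, hence compact.

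\emph{Main obstacle.} The crux is producing the required $1$-Lipschitz surjections $X\to Y_m$. Sending $\widetilde{Y}_m$ onto $Y_m$ by the canonical isometry achieves surjectivity, but extending the map over $Z$ and over the copies $\widetilde{Y}_n$ with $n\ne m$ in a $1$-Lipschitz way is delicate: nearest-point retractions $Z\to Y_m$ are not Lipschitz, and constant extensions break the Lipschitz bound whenever $r_n<\diam(Y_m)$. The resolution must exploit the \emph{uniform} total-boundedness of $\cY$ (for every $\varepsilon>0$ a uniform $N(\varepsilon)$ bounds the size of $\varepsilon$-nets in all $Y_n$). I anticipate building $X$ as a limit of compact spaces associated to a nested sequence of finite $\varepsilon$-net approximations, with $1$-Lipschitz transition maps between the relevant nets, and with a carefully designed metric so that both compactness and $1$-Lipschitz surjectivity onto each $Y_n$ survive in the limit.
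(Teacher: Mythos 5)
Your reduction to a countable dense subfamily and your Arzel\`a--Ascoli argument for the stability of $\prec$ under GH-limits of the target are both correct; the latter is essentially the content of the paper's Lemma~\ref{lem;GHfinal} (which proves the stronger approximate version $Y_n\prec_{\ep_n}X_n\Rightarrow Y\prec X$). You also correctly diagnose that the hedgehog space fails: there is no $1$-Lipschitz way to collapse $Z$ and the spines $\widetilde{Y}_n$, $n\ne m$, onto $Y_m$. But at that point the proof stops. The final paragraph (``I anticipate building $X$ as a limit of compact spaces associated to a nested sequence of finite $\ep$-net approximations\dots'') is a statement of intent, not an argument, and it is precisely here that the entire difficulty of the proposition lives. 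Nothing you have written produces a single compact space dominating even two spaces $Y_1,Y_2$ that do not already dominate one another, so the proposal as it stands has a genuine gap.

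The missing ingredient is the paper's Lemma~\ref{lem;GHkey}: if a \emph{finite} metric space $X$ satisfies $\cY\prec_\ep X$ (meaning each $Y\in\cY$ admits a map $f_Y:X\to Y$ that is $(1,\ep)$-Lipschitz with $\ep$-dense image), then there is a finite metric space $X'$ with $d_{GH}(X,X')<6\ep$ and $\cY\prec_{\ep'}X'$. The construction is not a system of transition maps between nets, as you anticipate, but a fiberwise product: one picks a finite $\ep'$-dense subfamily $\cY'\subset\cY$, finite $\ep'$-nets $N_Y\subset Y$, sets $N_Y(x):=\{y\in N_Y:d_Y(f_Y(x),y)\le\ep\}$, and replaces each point $x\in X$ by the fiber $\prod_{Y\in\cY'}N_Y(x)$, metrized by the maximum of the coordinate distances and $d_X$. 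Each fiber has diameter at most $2\ep$, so the projection $X'\to X$ is a $3\ep$-isometry, while the coordinate projections $X'\to Y$ are genuinely $1$-Lipschitz (after the error is absorbed into the metric of $X'$) with $\ep'$-dense image. Iterating with $\ep_n\to0$ summable gives a GH-Cauchy sequence of finite spaces whose limit $X$ satisfies $\cY\prec_{\ep_n}X_n$ for all $n$, and your stability lemma (in its approximate form) then yields $\cY\prec X$. Without this product-of-fibers step, or some substitute for it, the proof does not go through.
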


After some preparations in Section~\ref{sec;pre},
we present a proof of Theorem~\ref{thm;main} in Section~\ref{sec;pf}.
Our proof is a slight modification of Gromov's sketch in \cite{G}.
In Section~\ref{sec;GH}, we prove Proposition~\ref{prop;GH}.

In our argument,
we do not pay careful attention to distinguish an mm-space and an element of $\cX$
and consider an element $x$ of a product $\prod_{\lambda\in\Lambda} X_\lambda$ of a family $\{X_\lambda\}_{\lambda\in\Lambda}$ of sets
as a map $x:\Lambda\to\bigcup_{\lambda\in\Lambda} X_\lambda$
with $x(\lambda)\in X_\lambda$ for any $\lambda\in\Lambda$.

\section{Preliminaries}\label{sec;pre}

In this section, we recall some definitions and facts from \cite{S}.
A reader who is familiar with it can safely skip this section.
We will use the notations in \cite{S} such as
\[
U_\ep (A) := \{ x\in X : d_X (x, A) <\ep \}
\]
for a subset $A\subset X$ of a metric space $(X, d_X)$ and $\ep>0$.

\begin{defn}[e.g.~{\cite[Definition~4.21]{S}}]
Let $X$ and $Y$ be mm-spaces and $\ep>0$.
We call a Borel map $f:X\to Y$ an \emph{$\ep$-mm-isomorphism} if
there exists a Borel set $\tX\subset X$, called a \emph{non-exceptional domain}, with
\begin{enumerate}
\item $\mux(\tX) \ge 1-\ep$,
\item $|d_X (x, x') -d_Y (f(x), f(x'))| \le \ep$ for any $x, x'\in\tX$, and
\item $\proh(f_*\mux, \muy) \le\ep$,
\end{enumerate}
where $\proh$ denotes the Prohorov distance between Borel probability measures on $(X, d_X)$.
\end{defn}

The definition of the box distance $\Box$ can be found in e.g.~\cite[Definition~4.4]{S}.
We know that $(\cX, \Box)$ is a complete metric space, e.g.~\cite[Theorems~4.10, 4.14]{S}.
We need the following rather than its precise definition.
\begin{lem}[e.g.~{\cite[Lemma~4.22]{S}}]\label{lem;mmisombox}
Let $X$ and $Y$ be mm-spaces and $\ep>0$.
\begin{enumerate}
\item If there exists an $\ep$-mm-isomorphism $f:X\to Y$, then $\Box(X, Y)\le 3\ep$.
\item If $\Box(X, Y)<\ep$, then there exists a $3\ep$-mm-isomorphism $f:X\to Y$.
\end{enumerate}
\end{lem}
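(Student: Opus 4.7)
The plan is to work through both directions using parametrizations of mm-spaces. Recall that a parametrization of $X$ is a Borel map $\phi : [0,1] \to X$ with $\phi_* \mathcal{L}^1 = \mux$, and that $\Box(X, Y)$ equals the infimum of $\ep \ge 0$ for which there exist parametrizations $\phi$ of $X$, $\psi$ of $Y$, and a Borel set $I \subset [0, 1]$ with $\mathcal{L}^1(I) \ge 1 - \ep$ and $|d_X(\phi(s), \phi(t)) - d_Y(\psi(s), \psi(t))| \le \ep$ for all $s, t \in I$. Both implications translate between this description and the $\ep$-mm-isomorphism description.

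For (2), the hypothesis $\Box(X, Y) < \ep$ furnishes parametrizations $\phi, \psi$ and a set $I$ as above. A Borel selection theorem produces a section $\sigma : \phi(I) \to I$ of $\phi$, and I would set $\tX := \phi(I)$ and define $f(x) := \psi(\sigma(x))$ on $\tX$, extended arbitrarily elsewhere. The measure bound $\mux(\tX) \ge \mathcal{L}^1(I) \ge 1-\ep$ and the metric distortion bound on $\tX \times \tX$ are immediate from the corresponding bounds on $I \times I$. For the Prohorov bound, observe that $f \circ \phi$ and $\psi$ agree up to distance $\ep$ on $I$: for $t \in I$, both $t$ and $\sigma(\phi(t))$ lie in $I$ and are identified by $\phi$, so the distortion bound forces $d_Y(\psi(t), \psi(\sigma(\phi(t)))) \le \ep$. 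The coupling $(f \circ \phi, \psi)_* \mathcal{L}^1$ then places mass at least $1-\ep$ on $\{(y_1, y_2) : d_Y(y_1, y_2) \le \ep\}$, giving $\proh(f_*\mux, \muy) \le \ep$; this is in particular a $3\ep$-mm-isomorphism.

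For (1), given an $\ep$-mm-isomorphism $f : X \to Y$, I would take any parametrization $\phi$ of $X$ and set $g := f \circ \phi$. The condition $\proh(f_*\mux, \muy) \le \ep$ together with Strassen's theorem supplies a coupling $\pi$ on $Y \times Y$ with the correct marginals and $\pi(\{d_Y > \ep\}) \le \ep$. Disintegrating $\pi$ along its first marginal and choosing a Borel-measurable family of parametrizations $\eta_y : [0,1] \to Y$ of the conditional measures $\pi_y$ yields $\tilde\psi(s, u) := \eta_{g(s)}(u)$ on $[0,1]^2$, pushing $\mathcal{L}^1 \otimes \mathcal{L}^1$ to $\muy$ and with $d_Y(g(s), \tilde\psi(s, u)) \le \ep$ off a set of Lebesgue measure $\le \ep$. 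Composing with a measure-preserving Borel isomorphism $[0,1] \cong [0,1]^2$ then produces parametrizations $\phi'$ of $\mux$ and $\psi$ of $\muy$ with $f \circ \phi' = \psi$ up to distance $\ep$ off a set of measure $\le \ep$. Intersecting this set with $\phi'^{-1}(\tX)$ and invoking the triangle inequality produces $I$ with $\mathcal{L}^1(I) \ge 1 - 2\ep$ and metric distortion at most $3\ep$ on $I \times I$, so $\Box(X, Y) \le 3\ep$.

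The main technical obstacle is the construction of $\psi$ in (1): one must produce a parametrization of $\muy$ that is close to the given Borel map $g = f \circ \phi$. This requires Strassen's characterization of the Prohorov distance, a Borel disintegration of $\pi$, a measurable family of parametrizations of the conditional measures, and the detour through $[0,1]^2$ before a measure-preserving reduction to $[0,1]$. The remainder is triangle-inequality bookkeeping, and the factor $3$ in the bound reflects one $\ep$-error from the original metric distortion together with two $\ep$-approximation errors from $\psi$ at the two input points.
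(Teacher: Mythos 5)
First, a point of order: the paper does not prove this lemma at all --- it is imported verbatim from Shioya's book as \cite[Lemma~4.22]{S}, so there is no in-paper proof to compare against. Your strategy (translating between the parametrization description of $\Box$ and the $\ep$-mm-isomorphism description) is exactly the standard one used in \cite{S}, and part (1) is essentially correct as you describe it: Strassen plus a disintegration of the coupling produces a parameter $\psi$ of $\muy$ that is $\ep$-close to $f\circ\phi'$ off a set of measure $\le\ep$, and intersecting with $\phi'^{-1}(\tX)$ gives the set $I$ with $1-\mathcal{L}^1(I)\le 2\ep$ and distortion $\le\ep+\ep+\ep=3\ep$. The one step there you should not wave at is the ``Borel-measurable family of parametrizations $\eta_y$ of the conditional measures'': it is true, but needs an argument (e.g.\ embed $Y$ Borel-isomorphically into $[0,1]$ and use the generalized inverse of the cumulative distribution function, which is jointly measurable in the measure and the parameter).

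The genuine weak point is in part (2). You set $\tX:=\phi(I)$ and invoke ``a Borel selection theorem'' for a section $\sigma:\phi(I)\to I$. But $\phi(I)$ is in general only an analytic set, not Borel --- whereas the definition of an $\ep$-mm-isomorphism in this paper requires $\tX$ to be Borel --- and the Jankov--von Neumann uniformization only yields a universally measurable section, not a Borel one; Borel sections of arbitrary Borel surjections need not exist. As written, this step fails. The standard repair: since $\Box(X,Y)<\ep$, the parameters realize the conditions with some $\ep''<\ep$; by inner regularity and Lusin's theorem choose a compact $K\subset I$ with $\mathcal{L}^1(I\setminus K)<\ep-\ep''$ on which $\phi$ and $\psi$ are continuous. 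Then $\phi(K)$ is compact (hence Borel), the continuous surjection $\phi|_K:K\to\phi(K)$ admits a Borel section by a measurable selection theorem for closed-valued maps, and your argument goes through verbatim with $\tX:=\phi(K)$, the measure loss being absorbed by the slack $\ep-\ep''$. With this fix your construction actually produces an $\ep$-mm-isomorphism, which is stronger than the stated $3\ep$ and therefore perfectly acceptable; the rest of your bookkeeping (the coupling $t\mapsto(f(\phi(t)),\psi(t))$ giving $\proh(f_*\mux,\muy)\le\ep$) is correct.
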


We summarize a lemma in \cite{S} as follows.
\begin{lem}[e.g.~{\cite[Lemma~4.28]{S}}]\label{lem;precpt}
Let $\cY\subset\cX$.
Then $\cY$ is $\Box$-precompact if and only if
there exists $\Delta=\Delta(\ep)<\infty$ for any $\ep>0$ such that
any $Y\in\cY$ admits a set $\cK_Y$ of Borel sets of $Y$ with
\[
\#\cK_Y \le\Delta, \quad
\max_{K\in\cK_Y} \diam K \le\ep, \quad
\diam \bigcup\cK_Y \le\Delta, \quad\text{ and }\quad
\muy(\bigcup\cK_Y)\ge 1-\ep.
\]
\end{lem}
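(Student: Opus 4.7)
I would prove the two implications separately; necessity is the harder direction.

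\emph{Sufficiency.} Fix $\ep > 0$. For each $Y \in \cY$ with covering family $\cK_Y = \{K_1, \ldots, K_k\}$ ($k \le \Delta(\ep)$), disjointify by replacing $K_j$ with $K_j \setminus \bigcup_{i<j} K_i$, choose $y_j \in K_j$ for every nonempty piece, and form the finite mm-space $\hat Y$ on $\{y_1, \ldots, y_k\}$ (inheriting the metric from $Y$) with atomic weights $\hat\mu_Y(\{y_j\}) := \muy(K_j)$ for $j \ge 2$ and $\hat\mu_Y(\{y_1\}) := \muy(K_1) + \muy(Y \setminus \bigcup_j K_j)$. The Borel map $\phi : Y \to \hat Y$ sending $y \in K_j$ to $y_j$ (and the exceptional part to $y_1$) pushes $\muy$ exactly to $\hat\mu_Y$ and distorts the metric on $\bigcup_j K_j$ by at most $\diam K_j + \diam K_{j'} \le 2\ep$, so it is a $2\ep$-mm-isomorphism, and Lemma \ref{lem;mmisombox} (1) yields $\Box(Y, \hat Y) \le 6\ep$. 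Since every $\hat Y$ has at most $\Delta(\ep)$ points and diameter at most $\Delta(\ep)$, the family $\{\hat Y : Y \in \cY\}$ sits in the $\Box$-compact set $\cX^{\Delta(\ep)}_{\le \Delta(\ep)}$ of Example \ref{ex;precpt}. Letting $\ep \to 0$ exhibits $\cY$ as totally bounded, and completeness of $(\cX, \Box)$ then gives precompactness.

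\emph{Necessity.} Fix $\ep > 0$ and a small auxiliary parameter $\delta = \delta(\ep) \le \ep/20$. Using total boundedness of $\cY$, pick a finite $\delta$-net $\{X_1, \ldots, X_N\} \subset \cX$ for $\cY$. For each $X_i$, Ulam's tightness theorem gives a compact $C_i \subset X_i$ with $\mu_{X_i}(C_i) \ge 1 - \delta$, which I partition into finitely many Borel pieces $B_1^i, \ldots, B_{k_i}^i$ of diameter $\le \delta$. Given $Y \in \cY$, pick $X_i$ with $\Box(X_i, Y) < \delta$ and apply Lemma \ref{lem;mmisombox} (2) to obtain a $3\delta$-mm-isomorphism $f : X_i \to Y$ with non-exceptional domain $\tX \subset X_i$; choose $x_j \in B_j^i \cap \tX$ for every $j$ with this intersection nonempty, and set
\[
\cK_Y := \bigl\{ \overline{U_{7\delta}(\{f(x_j)\})} : B_j^i \cap \tX \ne \emptyset \bigr\}.
\]

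\emph{Main obstacle.} The crux is verifying $\muy(\bigcup \cK_Y) \ge 1 - \ep$. One cannot take $K_j := f(B_j^i \cap \tX)$ directly, since images of Borel sets under Borel maps need not be Borel and the Prohorov bound in the definition of an $\ep$-mm-isomorphism controls $\muy$ only on open neighborhoods of Borel sets; moreover, representatives $x_j^i$ chosen in advance might not lie in $\tX$, depriving us of the distortion estimate. Choosing $x_j \in B_j^i \cap \tX$ anew for each $Y$ (the $B_j^i$ discarded because this intersection is empty lie in $X_i \setminus \tX$ and together contribute at most $\mu_{X_i}(X_i \setminus \tX) \le 3\delta$), the distortion inequality on $\tX$ gives $f(B_j^i \cap \tX) \subset U_{4\delta}(\{f(x_j)\})$, so the union $B$ of these open $4\delta$-balls satisfies $(f_* \mu_{X_i})(B) \ge 1 - 7\delta$; a $3\delta$-Prohorov thickening combined with $U_{3\delta}(U_{4\delta}(\cdot)) \subset U_{7\delta}(\cdot)$ then yields $\muy(\bigcup \cK_Y) \ge 1 - 10\delta$. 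The bounds $\#\cK_Y \le k_i$ and $\diam \bigcup \cK_Y \le \diam C_i + 17\delta$ depend only on $X_i$ and $\delta$, so $\Delta(\ep) := \max_i \max\{k_i, \diam C_i\} + 17\delta$ completes the argument once $\delta$ is chosen small enough.
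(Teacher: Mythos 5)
Your proof is correct; note that the paper itself states this lemma without proof, citing \cite[Lemma~4.28]{S}, and your argument is essentially the standard one behind that reference: sufficiency by collapsing each $Y$ onto a finite mm-space of at most $\Delta(\ep)$ points and diameter at most $\Delta(\ep)$, which lands in the compact set $\cX^{\Delta}_{\le\Delta}$ of Example~\ref{ex;precpt}, and necessity by combining a finite $\Box$-net, Ulam tightness, and Lemma~\ref{lem;mmisombox}(2). Your handling of the genuine pitfalls (non-Borel images, representatives falling outside the non-exceptional domain, the Prohorov thickening) is sound, and the slightly loose constants (e.g.\ $1-10\delta$ where $1-7\delta$ suffices) are conservative and harmless.
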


\begin{defn}[{\cite[Definitions~6.3, 6.4]{S}}]
A \emph{pyramid} is a non-empty closed set $\cP$ in $(\cX, \Box)$ with the following properties:
\begin{enumerate}
\item If $X\in\cP$ and $Y\in\cX$ satisfy $Y\prec X$, then $Y\in\cP$.
\item If $X, Y\in\cP$, then there exists $Z\in\cP$ with $X, Y\prec Z$.
\end{enumerate}

We say that a sequence $\seqn{\cP_n}$ of pyramids \emph{converges weakly} to a pyramid $\cP$ if
\begin{enumerate}
\item $\lim_{n\to\infty} \Box(X, \cP_n)=0$ for any $X\in\cP$ and
\item $\liminf_{n\to\infty} \Box(Y, \cP_n)>0$ for any $Y\in\cX\setminus\cP$.
\end{enumerate}
\end{defn}

A typical example of pyramids is $\cP_X$ associated with an mm-space $X$ in \eqref{eq;pyramid}.
The only fact that we need is that
the sequence $\seqn{\cP_{X_n}}$ of pyramids converges weakly to $\cP_Y$
if a sequence $\seqn{X_n}$ of mm-spaces converges to an mm-space $Y$ in $(\cX, \Box)$,
e.g.~\cite[Propositions~5.5, 6.13]{S}.

\section{Proof of Theorem~\ref{thm;main}}\label{sec;pf}

In this section, we make some preparations and present a proof of Theorem~\ref{thm;main}.

\begin{defn}
\label{def;CDLip}
Let $C, D\ge 0$.
We say that a map $f:X\to Y$ between metric spaces $X$ and $Y$ is \emph{$(C, D)$-Lipschitz} if it satisfies
\[
d_Y (f(x), f(x'))\le C d_X (x, x')+D
\]
for any $x, x' \in X$.
A $(C, 0)$-Lipschitz map is simply called a \emph{$C$-Lipschitz map}.
\end{defn}

\begin{lem}[cf.~{\cite[Lemma~4.19]{S}}]\label{lem;P}
Let $X$ be an mm-space and $\ep, \ep'>0$.
If a set $\cA$ of disjoint Borel sets of $X$ and a Borel probability measure $\nu$ on $X$ satisfy
\[
\sup_{A\in\cA} \diam A \le\ep, \qquad
\mux(C)\le\ep, \qquad
\sum_{A \in \cA} |\mux(A)-\nu(A)| \le\ep',
\]
where $C:= X\setminus\bigcup\cA$,
then $\proh(\mux, \nu)\le \ep+\ep'$.
\end{lem}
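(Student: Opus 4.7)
The plan is to verify the Prohorov-distance inequality directly from its definition. Recall that $\proh(\mux, \nu) \le \delta$ provided that for every Borel set $B\subset X$ one has $\mux(B) \le \nu(U_\delta(B)) + \delta$; this one-sided version is equivalent to the usual symmetric definition on a Polish space, so it suffices to establish it with $\delta = \ep + \ep'$.

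First, I fix an arbitrary Borel set $B\subset X$ and split it according to the partition structure. Let $\cA_B := \{A\in\cA : A\cap B \ne \emptyset\}$. Because each $A\in\cA$ has diameter at most $\ep$, picking any point of $A\cap B$ shows $A\subset U_\ep(B)$, hence $\bigcup\cA_B \subset U_\ep(B)$. On the other hand, every point of $X$ lies either in $C$ or in some (unique) member of $\cA$, so
\[
B \subset C \cup \bigcup\cA_B,
\qquad\text{and therefore}\qquad
\mux(B) \le \mux(C) + \mux\bigl(\bigcup\cA_B\bigr) \le \ep + \sum_{A\in\cA_B} \mux(A),
\]
where I used disjointness of $\cA$ (so only countably many $A$ carry positive $\mux$-mass, and the sum is well defined) together with the hypothesis $\mux(C)\le\ep$.

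Second, I replace $\mux(A)$ by $\nu(A)$ at the cost of the prescribed total variation: writing $\mux(A) \le \nu(A) + |\mux(A)-\nu(A)|$ and summing over $A\in\cA_B$ gives
\[
\sum_{A\in\cA_B} \mux(A) \le \nu\bigl(\bigcup\cA_B\bigr) + \sum_{A\in\cA} |\mux(A) - \nu(A)| \le \nu(U_\ep(B)) + \ep',
\]
again using disjointness and the inclusion $\bigcup\cA_B\subset U_\ep(B)$. Combining with the first step yields $\mux(B) \le \nu(U_\ep(B)) + \ep + \ep' \le \nu(U_{\ep+\ep'}(B)) + (\ep+\ep')$, which is exactly what we needed.

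This is essentially bookkeeping rather than a deep argument, and I do not expect a serious obstacle. The only point requiring a little care is the choice of the right inclusion $\bigcup\cA_B \subset U_\ep(B)$ and the recognition that the disjointness of $\cA$ turns the measure of the union into an honest sum (so that the $\ep'$ hypothesis is directly usable). If one prefers to verify the symmetric form of the Prohorov inequality, the reverse direction follows along the same lines after noting that $\nu(C) \le \ep + \ep'$, which is an immediate consequence of $\mux(\bigcup\cA) \ge 1-\ep$ and $|\mux(\bigcup\cA) - \nu(\bigcup\cA)| \le \ep'$.
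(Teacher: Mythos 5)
Your argument is correct and follows essentially the same route as the paper: decompose $B$ via $\cA_B=\{A\in\cA : A\cap B\neq\emptyset\}$ and $C$, swap $\mux(A)$ for $\nu(A)$ at cost $\ep'$, and absorb $\bigcup\cA_B$ into an enlargement of $B$. The only nitpick is that $\diam A\le\ep$ gives $A\subset\{x: d_X(x,B)\le\ep\}$, which need not lie in the open set $U_\ep(B)$; but since you ultimately pass to $U_{\ep+\ep'}(B)$ (as the paper does directly), the inclusion you actually need holds and the proof stands.
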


\begin{proof}
Take any Borel set $B$ of $X$ and let $\cA' := \{A \in \cA : A \cap B \neq \emptyset\}$ and $\overline{\ep}:=\ep+\ep'$.
Then we have
\[
\mux(B) \le \sum_{A \in \cA'} \mux(A) + \mux(C) \le \sum_{A \in \cA'} \nu(A) +\overline{\ep} \le \nu(U_{\overline{\ep}} (B))+\overline{\ep}
\]
and obtain $\proh(\mux, \nu)\le\overline{\ep}$.
\end{proof}

In applying Lemma~\ref{lem;P},
we note that $1-\ep\le \nu(A)/\mux(A) \le 1+\ep$ for any $A\in\cA$ implies
\[
\sum_{A \in \cA} |\mux(A)-\nu(A)| \le \ep\sum_{A \in \cA} \mux(A) \le\ep.
\]

\begin{defn}[cf.~\cite{N}]
Let $X, Y$ be mm-spaces, $\cY\subset\cX$ a subset, and $\ep>0$.

We write $\cY\prec X$ if $Y\prec X$ for any $Y\in\cY$.

We write $Y\prec_\ep X$ if
there exist a Borel set $\tX\subset X$, called a \emph{non-exceptional domain}, and a Borel map $f:X\to Y$ with
\begin{enumerate}
\item $\mux(\tX)\ge 1-\ep$,
\item $d_Y (f(x), f(x')) \le d_X (x, x') +\ep$ for any $x, x'\in \tX$, and
\item $\proh(f_* \mux, \muy) \le\ep$.
\end{enumerate}

We write $\cY\prec_{\ep, 0} X$ if the condition $Y\prec_\ep X$ holds with $\tX=X$ for any $Y\in\cY$.
\end{defn}

\begin{lem}\label{lem;O}
Let $Y$ be an mm-space and $\ep>0$.
Suppose that $\cA$ is a finite set of disjoint Borel sets of $Y$ with
\[
\diam A <\ep,\ \muy(\partial A)=0, \text{ and } \muy(A) >0 \text{ for any } A\in\cA.
\]
Then there exists an open neighborhood $\cO=\cO(Y, \cA, \ep)\subset\cX$ of $Y$ in $(\cX, \Box)$ for which
any $Z\in\cO$ admits a Borel map $\Phi:Y\to Z$ and a finite set $\cA_Z$ of disjoint Borel sets of $Z$ with
\begin{enumerate}
\item $| d_Y (y, y') -d_Z (\Phi(y), \Phi(y')) |< 3\ep \text{ for any } y, y' \in U$, where $U:=\bigcup\cA$,
\item $\diam A<\ep$, $\muz(A)>0$, $\Phi^{-1} (A) \cap U \in\cA$, and
\[
1-\ep < \frac{\Phi_* \muy(A)}{\muz (A)} < 1+\ep \text{ for any $A\in\cA_Z$, and}
\]
\item $\Phi(U)\subset U_Z$, where $U_Z :=\bigcup\cA_Z$.
\end{enumerate}
\end{lem}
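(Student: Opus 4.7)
The plan is, for $Z$ sufficiently close to $Y$ in the box distance, to pull the partition $\cA$ back to $Z$ through a $3\delta$-mm-isomorphism $g:Z\to Y$ supplied by Lemma~\ref{lem;mmisombox}(2), and to realize $\Phi$ as a step map collapsing each $A\in\cA$ to a chosen point of its pullback. Write $\eta:=\ep-\max_{A\in\cA}\diam A>0$, and for $\rho>0$ let $A^{-\rho}:=Y\setminus U_\rho(Y\setminus A)$. Since $\cA$ is finite and $\muy(\partial A)=0$ for every $A\in\cA$, we have $\muy(A^{-\rho})\uparrow\muy(A)$ and $\muy(U_\rho(A))\downarrow\muy(A)$ as $\rho\downarrow 0$, and likewise $\muy(U_\rho(U))\downarrow\muy(U)$ using $\partial U\subset\bigcup_{A\in\cA}\partial A$. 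Choose $\delta>0$ small enough that (a) $3\delta<\eta$, (b) $\muy(U_{3\delta}(A))-\muy(A^{-3\delta})+6\delta<\ep\muy(A)/3$ for every $A\in\cA$, and, in case $\muy(U)<1$, (c) $\muy(U_{3\delta}(U))-\muy(U)+3\delta<\muy(Y\setminus U)$. Set $\cO:=\{Z\in\cX:\Box(Y,Z)<\delta\}$, an open neighborhood of $Y$ in $(\cX,\Box)$.

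For $Z\in\cO$, Lemma~\ref{lem;mmisombox}(2) provides a $3\delta$-mm-isomorphism $g:Z\to Y$ with non-exceptional domain $\tZ$. Define $A_Z:=g^{-1}(A)\cap\tZ$ for each $A\in\cA$ and $\cA_Z:=\{A_Z:A\in\cA\}$; these are pairwise disjoint Borel subsets of $Z$. The near-isometry $|d_Z(z,z')-d_Y(g(z),g(z'))|\le 3\delta$ on $\tZ$ together with $g(A_Z)\subset A$ yields $\diam A_Z\le\diam A+3\delta<\ep$ by (a). Applying the Prohorov bound $\proh(g_*\muz,\muy)\le 3\delta$ to $A^{-3\delta}$ and to $A$ produces $\muy(A^{-3\delta})-6\delta\le\muz(A_Z)\le\muy(U_{3\delta}(A))+3\delta$; combined with (b), this forces $\muz(A_Z)>0$ and $\muy(A)/\muz(A_Z)\in(1-\ep,1+\ep)$. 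The analogous estimate with $U$ in place of $A$ gives $\muz(Z\setminus U_Z)\ge 1-\muy(U_{3\delta}(U))-3\delta$, which is strictly positive whenever $\muy(U)<1$ thanks to (c).

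Now pick $z_A\in A_Z$ for each $A$ (possible since $\muz(A_Z)>0$) and define the Borel map $\Phi:Y\to Z$ by $\Phi(y):=z_A$ for $y\in A$ and $\Phi\equiv z_*$ on $Y\setminus U$, choosing $z_*\in Z\setminus U_Z$ when that complement is nonempty and $z_*:=z_{A_1}$ arbitrarily otherwise; in the latter situation the preceding paragraph forces $\muy(Y\setminus U)=0$, so the assignment is irrelevant for measures. Then $\Phi(U)\subset U_Z$ follows from $z_A\in A_Z$, the identity $\Phi^{-1}(A_Z)\cap U=A$ holds because the $z_A$ lie in distinct cells, and $\Phi_*\muy(A_Z)=\muy(A)$ by the calibration of $z_*$. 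For (1), given $y\in A$ and $y'\in A'$ in $U$, since $g(z_A)\in A$ and $g(z_{A'})\in A'$, the triangle inequality and the near-isometry give $|d_Y(y,y')-d_Z(z_A,z_{A'})|\le\diam A+\diam A'+3\delta\le 2(\ep-\eta)+\eta<3\ep$. The main technical obstacle is orchestrating a single $\delta$ so that the metric, measure-ratio, positivity, and complement-nonemptiness estimates all hold at once; the hypothesis $\muy(\partial A)=0$ is precisely what makes the pertinent boundary discrepancies shrink to zero.
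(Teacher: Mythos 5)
Your proposal is correct, and the core construction coincides with the paper's: pull the partition $\cA$ back to $Z$ through the $3\delta$-mm-isomorphism $g:Z\to Y$ furnished by Lemma~\ref{lem;mmisombox}(2), restrict to the non-exceptional domain, and let $\Phi$ collapse each $A\in\cA$ to a chosen point of $g^{-1}(A)\cap\tZ$ (with a separate target for $Y\setminus U$). Where you genuinely diverge is in how the neighborhood is produced: the paper argues by contradiction along a sequence $Z_n\to Y$ and invokes the Portmanteau theorem on the continuity sets $A$ to get $\mu_{Z_n}(\Psi_n^{-1}(A)\cap\tZ_n)\to\muy(A)$, so no explicit radius for $\cO$ is extracted; you instead make the argument direct and effective, converting $\muy(\partial A)=0$ into the quantitative statement $\muy(U_\rho(A))-\muy(A^{-\rho})\to0$ and choosing a single explicit $\delta$ against the Prohorov bound. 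Both routes use the boundary-nullity hypothesis in exactly the same place, and your version has the mild advantage of exhibiting $\cO$ as an explicit box-ball. Two cosmetic points you should tighten: $\proh(g_*\muz,\muy)\le 3\delta$ only gives $g_*\muz(A)\le\muy(U_t(A))+t$ for $t>3\delta$ (and $U_t(A^{-3\delta})\subset A$ only for $t\le 3\delta$), so your sandwich should be stated with, say, $U_{4\delta}(A)$ and $A^{-4\delta}$, with condition (b) adjusted accordingly; and the passage from $|\muz(A_Z)-\muy(A)|<\ep\muy(A)/3$ to $\muy(A)/\muz(A_Z)<1+\ep$ uses $\ep\le1$, which you may assume without loss of generality. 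Neither affects the validity of the argument.
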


\begin{proof}
If such a neighborhood $\cO(Y, \cA, \ep)$ does not exist,
there exists a sequence $\seqn{Z_n}$ of mm-spaces with $\Box(Z_n, Y)\to 0$ as $\nti$ for which
$Z_n$ does not admit such $\cA_{Z_n}$.
Then there exists an $\ep_n$-mm-isomorphism $\Psi_n:Z_n \to Y$ with a non-exceptional domain $\tZ_n \subset Z_n$ for each $n$ with $\ep_n \to 0$ as $\nti$ by Lemma~\ref{lem;mmisombox}.
We define
\[
\cA_{Z_n} := \{ \Psi_n^{-1} (A) \cap \tZ_n : A\in\cA \} \quad\text{ and }\quad U_{Z_n} :=\bigcup\cA_{Z_n}.
\]
Note that $\diam (\Psi_n^{-1} (A) \cap \tZ_n) \le \diam A + \ep_n < \ep$ for any $A \in \cA$ and large $n$.

Since $(\Psi_n)_* \mu_{Z_n}$ weakly converges to $\muy$, the Portmanteau theorem implies
\begin{align}\label{eq;Portmanteau}
\lim_{n\to\infty} \mu_{Z_n}(\Psi_n^{-1}(A)\cap \tZ_n) = \lim_{n\to\infty} (\Psi_n)_* \mu_{Z_n}(A) = \muy(A) > 0
\end{align}
and we choose a point $\pt_{A, n} \in \Psi_n^{-1}(A)\cap \tZ_n$ for each $A\in\cA$.
We also note that
\[
\lim_{n\to\infty} \mu_{Z_n}(U_{Z_n})
= \lim_{n\to\infty} \mu_{Z_n}( \Psi_n^{-1} (U))
= \lim_{n\to\infty} (\Psi_n)_* \mu_{Z_n}(U)
= \muy(U)
\]
and choose some $z_n \in Z_n$ if $\muy(U)=1$ and $z_n \in Z_n \setminus U_{Z_n}$ if $\muy(U)<1$.

Define a map $\Phi_n:Y \to Z_n$ as
\[
\Phi_n (y) := \left\{\begin{array}{ll} \pt_{A, n} & \text{if } y \in A, \\
z_n & \text{if } y \in Y\setminus U. \end{array}\right.
\]
Then we have $\Phi_n (U) \subset U_{Z_n}$ and
\begin{align*}
| d_Y (y, y') -d_{Z_n} (\Phi_n(y), \Phi_n(y')) |
& = | d_Y (y, y') -d_{Z_n} (\pt_{A, n}, \pt_{A', n}) | \\
& \le | d_Y (y, y') -d_Y (\Psi_n (\pt_{A, n}), \Psi_n (\pt_{A', n})) | + \ep_n \\
& \le d_Y (y, \Psi_n (\pt_{A, n})) + d_Y (y', \Psi_n (\pt_{A', n})) + \ep_n
< 3\ep
\end{align*}
for any $y \in A$ and any $y' \in A'$ with $A, A' \in \cA$.

Moreover we have $\muy(\Phi_n^{-1}(\Psi_n^{-1}(A)\cap \tZ_n)) = \muy(A)$
and Equation~\eqref{eq;Portmanteau} implies
\begin{align*}
\lim_{n\to\infty}  \frac{(\Phi_n)_* \muy(\Psi_n^{-1}(A)\cap \tZ_n)}{\mu_{Z_n}(\Psi_n^{-1}(A)\cap \tZ_n)}
=\frac{\muy(A)}{\muy(A)} =1
\end{align*}
for every $A\in\cA$.
These contradict the assumption and finish the proof.
\end{proof}

\begin{cor}\label{cor;O}
Let $Y$ be an mm-space and $\ep>0$.
Suppose that $\cA$ is as in Lemma~\ref{lem;O}.
Then there exists an open neighborhood $\cO=\cO(Y, \cA, \ep)\subset\cX$ of $Y$ in $(\cX, \Box)$ with the following properties:
If $X$ is an mm-space and $h:X\to Y$ is a Borel map with $h(X) \subset U:=\bigcup\cA$,
then any $Z\in\cO$ admits a Borel map $h_Z: X\to Z$ and a finite set $\cA_Z$ of disjoint Borel sets of $Z$ with
\begin{enumerate}
\item $\diam A<\ep$ and $\muz(A)>0$ for any $A\in\cA_Z$,
\item $| d_Y (h(x), h(x')) -d_Z (h_Z (x), h_Z (x')) |< 3\ep \text{ for any } x, x' \in h^{-1}(U)$,
\item $h_Z (X) \subset U_Z$, where $U_Z :=\bigcup\cA_Z$, and
\item there is a bijection $\cA_Z \to \cA; A\mapsto A_Y$ for which any $A\in\cA_Z$ satisfies
\[
(h_Z)_* \mux (A) =h_*\mux (A_Y) \quad\text{ and }\quad 1-\ep< \frac{\muy(A_Y)}{\muz(A)} <1+\ep.
\]
\end{enumerate}
\end{cor}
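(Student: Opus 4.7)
The plan is to take exactly the neighborhood $\cO = \cO(Y, \cA, \ep)$ produced by Lemma~\ref{lem;O} and to build $h_Z$ as the composition of $h$ with the map $\Phi \colon Y \to Z$ furnished by that lemma. Concretely, for each $Z \in \cO$, set $h_Z := \Phi \circ h$, let $\cA_Z$ be the family provided by Lemma~\ref{lem;O}, and define the bijection $\cA_Z \to \cA$ by $A \mapsto A_Y := \Phi^{-1}(A) \cap U$, which does take values in $\cA$ thanks to Lemma~\ref{lem;O}(2).

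Properties (1) and (3) are then inherited essentially for free: (1) is part of Lemma~\ref{lem;O}(2), while (3) follows from the hypothesis $h(X) \subset U$ together with the inclusion $\Phi(U) \subset U_Z$ from Lemma~\ref{lem;O}(3). For (2), given $x, x' \in h^{-1}(U)$, we have $h(x), h(x') \in U$, and Lemma~\ref{lem;O}(1) applied to the pair $(h(x), h(x'))$ yields exactly the required estimate with $h_Z = \Phi \circ h$.

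Property (4) is essentially a bookkeeping step. For the measure equality, I use $h(X) \subset U$ to write
\[
h_Z^{-1}(A) = h^{-1}(\Phi^{-1}(A)) = h^{-1}(\Phi^{-1}(A) \cap U) = h^{-1}(A_Y),
\]
and push forward by $\mux$. For the ratio bound, the corresponding estimate on $\Phi_* \muy(A)/\muz(A)$ is already in Lemma~\ref{lem;O}(2), so the task reduces to showing $\Phi_* \muy(A) = \muy(A_Y)$. This is the one subtle point, and it holds because, in the construction of $\Phi$ in the proof of Lemma~\ref{lem;O}, the image point outside $U$ was chosen in $Z \setminus U_Z$ whenever $\muy(U) < 1$, forcing $\Phi^{-1}(A) \subset U$ and $A_Y = \Phi^{-1}(A)$; in the remaining case $\muy(U) = 1$, the complement $Y \setminus U$ is $\muy$-null and the identity is automatic. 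The main (admittedly small) obstacle is exactly this bit of measure-theoretic bookkeeping, which is the reason why the choice of the extra point $z_n$ was carefully built into the proof of Lemma~\ref{lem;O}.
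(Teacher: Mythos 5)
Your proposal is correct and is essentially identical to the paper's own proof: both take $\cO$ from Lemma~\ref{lem;O}, set $h_Z := \Phi\circ h$, define the bijection by $A\mapsto A_Y := \Phi^{-1}(A)\cap U$, and reduce every item to the corresponding conclusion of Lemma~\ref{lem;O}. Your explicit justification of $\Phi_*\muy(A)=\muy(A_Y)$ via the placement of the auxiliary point in $Z\setminus U_Z$ (or the nullity of $Y\setminus U$) is exactly the bookkeeping the paper leaves implicit in the identity $\muy(A_Y)/\muz(A)=\Phi_*\muy(A)/\muz(A)$.
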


\begin{proof}
Let $\cO=\cO(Y, \cA, \ep)$ be as in Lemma~\ref{lem;O}.
For $Z\in\cO$, let $\Phi :Y\to Z$ and $\cA_Z$ be also as in Lemma~\ref{lem;O} and define $h_Z := \Phi\circ h:X\to Z$.

If $x, x' \in X$ and $y=h(x), y'=h(x')\in U$, we have
\[
| d_Y (h(x), h(x')) -d_Z (h_Z (x), h_Z (x')) |
=| d_Y (y, y') -d_Z (\Phi(y), \Phi(y')) |< 3\ep.
\]
For any $A \in \cA_Z$, the set $A_Y := \Phi^{-1}(A) \cap U \in\cA$ satisfies
\[
(h_Z)_* \mux (A) =\mux(h^{-1}(\Phi^{-1}(A)))= h_*\mux (A_Y) \quad\text{ and }\quad
\frac{\muy(A_Y)}{\muz(A)} = \frac{\Phi_* \muy(A)}{\muz(A)}.
\]
Thus $h_Z$ has the desired properties by Lemma~\ref{lem;O}.
\end{proof}

We use the following in the first step of our proof of Theorem~\ref{thm;main}.
\begin{lem}\label{lem;step1}
Let $\cY\subset\cX$ be a finite set and
$\cA_Y$ a non-empty finite set of disjoint Borel sets of $Y\in\cY$ with $\muy(A)>0$ for any $A\in\cA_Y$ and $U_Y:=\bigcup\cA_Y \subset Y$.
Then there exist a finite mm-space $X$ and $1$-Lipschitz maps $f_Y : X\to Y$ with
\[
f_Y (X)\subset U_Y \quad\text{ and }\quad  (f_Y)_* \mux(A)=\frac{\muy(A)}{\muy(U_Y)}
\]
for any $Y\in\cY$ and $A\in\cA_Y$.
\end{lem}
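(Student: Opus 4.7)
The plan is to realize $X$ as a very concrete finite product-indexed set, mapping into each $Y \in \cY$ by sending each tuple to a pre-chosen representative point of the designated cell of $\cA_Y$. To begin, I would pick, for every $Y\in\cY$ and every $A\in\cA_Y$, a representative $p^Y_A \in A$; this is possible since $\muy(A)>0$ forces $A\neq\emptyset$. I would then take the finite set $X := \prod_{Y\in\cY}\cA_Y$, understood per the paper's convention as maps $\sigma:\cY\to\bigcup_Y\cA_Y$ with $\sigma(Y)\in\cA_Y$, and define $f_Y:X\to Y$ by $f_Y(\sigma):=p^Y_{\sigma(Y)}$, so that the inclusion $f_Y(X)\subset U_Y$ is automatic.

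Next I would metrize $X$ by pulling back the sup-product distance of $\prod_{Y\in\cY} Y$, namely $d_X(\sigma,\tau):=\max_{Y\in\cY} d_Y(f_Y(\sigma),f_Y(\tau))$. If $\sigma\neq\tau$ then they differ at some coordinate $Y$, and the pairwise disjointness of the cells in $\cA_Y$ guarantees $p^Y_{\sigma(Y)}\neq p^Y_{\tau(Y)}$, so the maximum is strictly positive; this is what makes $d_X$ a genuine metric. Since $f_Y$ is a coordinate ``projection'' with respect to this sup distance, the $1$-Lipschitz property is immediate from the definition.

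For the measure, I would take the obvious product: set $\mux(\{\sigma\}):=\prod_{Y\in\cY}\muy(\sigma(Y))/\muy(U_Y)$. This is a Borel probability measure because the one-dimensional weights sum to $1$ over $\cA_Y$, and a direct summation (in which the factors over indices $Y'\neq Y$ collapse to $1$) yields $(f_Y)_*\mux(A)=\muy(A)/\muy(U_Y)$ for each $A\in\cA_Y$, as required.

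I do not anticipate a genuine obstacle: the construction is essentially tautological, and each verification is a one-liner. The only subtle point is checking that the pulled-back sup distance is actually a metric, and this rests solely on the pairwise disjointness of the cells of each $\cA_Y$, which is given by hypothesis.
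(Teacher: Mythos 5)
Your construction is exactly the paper's: the same product set $X=\prod_{Y\in\cY}\cA_Y$, the same choice of representative points, the same sup-product distance, the same product measure, and the same coordinate-projection maps $f_Y$. The proof is correct (and your remark that disjointness of the cells makes the pulled-back sup distance a genuine metric is a small point the paper leaves implicit).
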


\begin{proof}
We choose $\pt_A\in A$ for each $A\in\bigcup_{Y\in\cY} \cA_Y$ and define
$X:= \prod_{Y\in\cY} \cA_Y$,
\[
d_X (x, x') := \max_{Y\in\cY} d_Y (\pt_{x(Y)}, \pt_{x'(Y)}), \quad\text{ and }\quad
\mux(\{x\}) := \prod_{Y\in\cY} \frac{\muy(x(Y))}{\muy(U_Y)}
\]
for $x, x' \in X$.
Then $X=(X, d_X, \mux)$ is a finite mm-space and the map
\[
f_Y: X\to Y, \quad f_Y (x)=\pt_{x(Y)}
\]
is $1$-Lipschitz and satisfies
\[
(f_Y)_* \mux(A)=\mux(f_Y^{-1} (A))=\mux(f_Y^{-1} (\pt_A)) =\frac{\muy(A)}{\muy(U_Y)}
\]
for any $Y\in\cY$ and $A\in\cA_Y$.
This finishes the proof.
\end{proof}

Although we do not use the following in our argument,
we record it for possible future applications.
\begin{cor}
If $\cY\subset\cX$ is precompact in $(\cX, \Box)$ and $\ep>0$,
then there exists a finite mm-space $X$ with $\cY\prec_{\ep, 0} X$.
\end{cor}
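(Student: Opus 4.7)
The plan is to combine a finite-net argument with Corollary~\ref{cor;O} and Lemma~\ref{lem;step1}. The idea is to approximate $\cY$ by a finite subset $\cY_0 \subset \cY$, build a single finite mm-space $X$ that works for $\cY_0$ via Lemma~\ref{lem;step1}, and then transfer the resulting $1$-Lipschitz maps to every element of $\cY$ via Corollary~\ref{cor;O}. Fix a small auxiliary parameter $\ep' > 0$, to be taken as a small constant multiple of $\ep$ at the end.

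First I would attach to each $Y' \in \cY$ a finite set $\cA_{Y'}$ of disjoint Borel sets of $Y'$ with $\diam A < \ep'$, $\mu_{Y'}(\partial A) = 0$, $\mu_{Y'}(A) > 0$ for every $A \in \cA_{Y'}$, and $\mu_{Y'}(U_{Y'}) > 1 - \ep'$, where $U_{Y'} := \bigcup \cA_{Y'}$. This is routine: by tightness of $\mu_{Y'}$ pick a compact set of $\mu_{Y'}$-measure $> 1 - \ep'$, cover it by balls whose radii are perturbed inside $(\ep'/4, \ep'/2)$ so that the bounding spheres are $\mu_{Y'}$-null (only countably many radii give positive mass), disjointify by subtracting lower-indexed balls, and discard null pieces. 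Apply Corollary~\ref{cor;O} to each $(Y', \cA_{Y'}, \ep')$ to obtain an open neighborhood $\cO_{Y'}$ of $Y'$ in $(\cX, \Box)$. Since $\cY$ is $\Box$-precompact, a finite subset $\cY_0 = \{Y_1, \ldots, Y_m\} \subset \cY$ satisfies $\cY \subset \bigcup_{i=1}^m \cO_{Y_i}$; abbreviate $\cA_i := \cA_{Y_i}$ and $U_i := U_{Y_i}$. Applying Lemma~\ref{lem;step1} to $\cY_0$ with the partitions $\{\cA_i\}$ produces a finite mm-space $X$ and $1$-Lipschitz maps $f_i : X \to Y_i$ with $f_i(X) \subset U_i$ and $(f_i)_* \mux(A) = \mu_{Y_i}(A)/\mu_{Y_i}(U_i)$ for $A \in \cA_i$; this $X$ is the candidate.

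For any $Y \in \cY$ I would pick $i$ with $Y \in \cO_{Y_i}$ and apply Corollary~\ref{cor;O} with the Borel map $h := f_i : X \to Y_i$ (its image lies in $U_i$) and target $Z := Y$. This yields a Borel map $f_Y : X \to Y$ and a finite disjoint Borel family $\cA_Y$ on $Y$ whose properties (1)--(4) can then be used. Property~(2) combined with the $1$-Lipschitzness of $f_i$ gives
\[
d_Y(f_Y(x), f_Y(x')) \le d_{Y_i}(f_i(x), f_i(x')) + 3\ep' \le d_X(x, x') + 3\ep'
\]
for every $x, x' \in X$. Property~(4) provides a bijection $\cA_Y \to \cA_i$, $A \mapsto A_{Y_i}$, with $(f_Y)_* \mux(A) = \mu_{Y_i}(A_{Y_i})/\mu_{Y_i}(U_i)$ and $\mu_{Y_i}(A_{Y_i})/\muy(A) \in (1 - \ep', 1 + \ep')$. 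A short computation using $\mu_{Y_i}(U_i) \ge 1 - \ep'$ then bounds both $\muy(Y \setminus U_Y)$ and $\sum_{A \in \cA_Y} |\muy(A) - (f_Y)_* \mux(A)|$ by fixed multiples of $\ep'$, so Lemma~\ref{lem;P} delivers $\proh((f_Y)_* \mux, \muy) \le C\ep'$ for an explicit constant $C$. Taking $\ep'$ small enough that both $3\ep' \le \ep$ and $C\ep' \le \ep$ yields $Y \prec_{\ep, 0} X$ for every $Y \in \cY$, as required.

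The main obstacle I anticipate is the bookkeeping: a single finite $X$ must serve the entire precompact family $\cY$, not just its finite approximant $\cY_0$, which is precisely what forces the detour through Corollary~\ref{cor;O}. The delicate point is to reconcile the $3\ep'$ approximate-isometry error of Corollary~\ref{cor;O} with the reweighting factor $1/\mu_{Y_i}(U_i)$ in the push-forward, so that Lemma~\ref{lem;P} still yields a Prohorov bound linear in $\ep'$; the resulting constants determine the final choice of $\ep'$ as a fraction of $\ep$.
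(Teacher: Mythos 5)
Your proposal follows essentially the same route as the paper's proof: attach measure-theoretically nice partitions to the spaces, extract a finite net, build the finite product space of Lemma~\ref{lem;step1} for that net, transfer the maps to all of $\cY$ via Corollary~\ref{cor;O}, and close with Lemma~\ref{lem;P}. The quantitative bookkeeping you describe (the $3\ep'$ distortion, the reweighting factor $1/\mu_{Y_i}(U_{Y_i})$, and a Prohorov bound linear in $\ep'$) all goes through.

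There is, however, one step that does not work as written. You index the neighborhoods $\cO_{Y'}$ by $Y'\in\cY$ and then assert that $\Box$-precompactness of $\cY$ yields a finite subfamily covering $\cY$. Precompactness gives total boundedness, i.e.\ finite covers by balls of a \emph{prescribed} radius, but the neighborhoods produced by Corollary~\ref{cor;O} (via the contradiction argument in the proof of Lemma~\ref{lem;O}) carry no lower bound on their size, and an open cover of a non-closed precompact set by neighborhoods of its own points need not admit a finite subcover --- compare $(0,1)\subset\R$ covered by the intervals $(y/2,1)$. The repair is exactly what the paper does: since $(\cX,\Box)$ is complete, $\overline{\cY}$ is compact, so one constructs $\cA_{Y'}$ and $\cO_{Y'}$ for every $Y'\in\overline{\cY}$ and extracts the finite subcover from this open cover of the compact set $\overline{\cY}$. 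Lemma~\ref{lem;step1} applies just as well to a finite subset of $\overline{\cY}$, and the rest of your argument is unchanged.
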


\begin{proof}
Since any mm-space $X$ satisfies $\cY\prec_{\ep, 0} X$ if $\ep\ge 1$,
we may assume that $\ep<1$.
For each $Y\in\overline{\cY}$,
we take a finite set $\cA_Y$ of disjoint Borel sets of $Y$ with
\[
\diam A <\ep, \ \muy(\partial A)=0, \ \muy(A)>0 \text{ for any } A\in\cA_Y \text{ and } \muy(U_Y)> (1+\ep)^{-1},
\]
where $U_Y := \bigcup\cA_Y$,
cf.~\cite[Lemma~42]{OY}.
Since $\cY$ is $\Box$-precompact,
Corollary~\ref{cor;O} implies that there exists a finite subset $\cY' \subset\overline{\cY}$ with $\cY \subset \bigcup_{Y\in\cY'} \cO(Y, \cA_Y, \ep)$ and
Lemma~\ref{lem;step1} states that
there exist a finite mm-space $X$ and $1$-Lipschitz maps $f_Y :X\to Y$ with
\[
f_Y (X)\subset U_Y \quad\text{ and }\quad
\frac{(f_Y)_* \mux(A)}{\muy(A)}=\frac{1}{\muy(U_Y)}
\]
for any $Y\in\cY' \text{ and } A\in\cA_Y$.
We note that $\muy(U_Y) > (1+\ep)^{-1} >1-\ep$.

For any $Z\in\cO(Y, \cA_Y, \ep)$ with some $Y\in\cY'$,
Corollary~\ref{cor;O} implies that there exists a map $f_Z: X\to Z$ with
\[
d_Z (f_Z (x), f_Z (x')) \le d_Y (f_Y (x), f_Y (x')) +3\ep \le d_X (x, x') +3\ep
\]
for any $x, x' \in X$ and, if we replace $\cA_Z$ with the one in Corollary~\ref{cor;O},
we have $(1+\ep)^2 <1+3\ep$,
\[
1-\ep < \frac{(f_Z)_* \mux(A)}{\muz(A)}
= \frac{(f_Y)_* \mux(A_Y)}{\muy(A_Y)}\frac{\muy(A_Y)}{\muz(A)}
= \frac{1}{\muy(U_Y)}\frac{\muy(A_Y)}{\muz(A)}
< 1+3\ep
\]
for any $A\in\cA_Z$, and $\muz(U_Z) > (1-\ep)\muy(U_Y) > 1-2\ep$.

Then Lemma~\ref{lem;P} yields $\proh(\muy, (f_Y)_* \mux) \le 5\ep$ for any $Y\in\cY$.
Thus $\cY\prec_{5\ep, 0} X$ and this finishes the proof.
\end{proof}

The following two lemmas are the key technical lemmas in our proof of Theorem~\ref{thm;main}.
\begin{lem}\label{lem;key}
Let $X$ be a finite mm-space, $\cY\subset\cX$ a finite subset, and $\ep, \ep'>0$.
Suppose that any $Y\in\cY$ admits a map $f_Y: X\to Y$ and finite sets $\cA_Y$ and $\cA'_Y$ of disjoint Borel sets of $Y$ with
\begin{enumerate}
\item $d_Y (f_Y (x), f_Y (x')) \le d_X (x, x') +\ep$ for any $x, x' \in X$,
\item $f_Y (X)\subset U_Y$, where $U_Y :=\bigcup\cA_Y$,
\item $\diam A\le\ep$ and $(f_Y)_* \mux(A) =\muy(A)/\muy(U_Y)$ for any $A\in\cA_Y$,
\item $\muy(Y \setminus U'_Y) < (\ep'/(1+\ep')) \min\{ \muy(A) : A\in\cA_Y \}$, where $U'_Y :=\bigcup\cA'_Y$,
\item $\muy(B)>0$ for any $B\in\cA'_Y$, and
\item $B\subset A$ or $A\cap B=\emptyset$ if $A\in\cA_Y$ and $B\in\cA'_Y$.
\end{enumerate}
Then there exists a finite mm-space $X'$ with $\Box(X, X') \le 9\ep$ for which
any $Y\in\cY$ admits a $1$-Lipschitz map $g_Y: X' \to Y$ with $g_Y (X)\subset U_Y \cap U'_Y$ and
\begin{align}\label{eq;key}
1\le \muy(U_Y) \frac{(g_Y)_* \mu_{X'} (B)}{\muy(B)} <1+\ep' \text{ for any } B\in\cA'_Y \text{ with } B\subset U_Y.
\end{align}
\end{lem}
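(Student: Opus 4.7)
The plan is to define $X'$ as a product-like refinement of $X$ in which each point $x\in X$ is split into copies indexed by tuples $(B_Y)_{Y\in\cY}$, where $B_Y\in\cA'_Y$ sits inside the unique cell $A_x^Y\in\cA_Y$ containing $f_Y(x)$; existence and uniqueness of $A_x^Y$ follow from (ii) and the disjointness in (vi). Condition (iv) combined with (v) yields $\muy(A_x^Y\cap U'_Y) > \muy(A_x^Y)/(1+\ep') > 0$, so at least one such $B_Y$ exists. After fixing a representative $p_B\in B$ for every $B\in\bigcup_{Y\in\cY}\cA'_Y$, I would set
\[
X' := \{(x,(B_Y)_{Y\in\cY}) : x\in X,\ B_Y\in\cA'_Y,\ B_Y\subset A_x^Y \text{ for all } Y\in\cY\},
\]
equip it with
\[
d_{X'}\bigl((x,(B_Y)),(x',(B'_Y))\bigr) := \max\Bigl(d_X(x,x'),\ \max_{Y\in\cY} d_Y(p_{B_Y},p_{B'_Y})\Bigr),
\]
and weight each singleton by
\[
\mu_{X'}(\{(x,(B_Y))\}) := \mux(\{x\})\prod_{Y\in\cY} \frac{\muy(B_Y)}{\muy(A_x^Y\cap U'_Y)}.
\]
The map $g_Y(x,(B_{Y'})) := p_{B_Y}$ is then $1$-Lipschitz by the ``max'' definition of $d_{X'}$, and lands in $U_Y\cap U'_Y$ by construction.

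For \eqref{eq;key}, I would fix $B^*\in\cA'_Y$ with $B^*\subset U_Y$, let $A^*\in\cA_Y$ be the unique cell containing $B^*$ (condition (vi)), and sum $\mu_{X'}$ over $\{(x,(B_{Y'})) : A_x^Y = A^*,\ B_Y = B^*\}$. The $Y'\ne Y$ factors sum to $1$ by the normalization in the definition of $\mu_{X'}$, and condition (iii) together with the marginal identity $\sum_{x:A_x^Y=A^*}\mux(\{x\}) = (f_Y)_*\mux(A^*) = \muy(A^*)/\muy(U_Y)$ give
\[
(g_Y)_*\mu_{X'}(B^*) = \frac{\muy(A^*)}{\muy(U_Y)}\cdot\frac{\muy(B^*)}{\muy(A^*\cap U'_Y)}.
\]
Multiplying by $\muy(U_Y)/\muy(B^*)$ produces $\muy(A^*)/\muy(A^*\cap U'_Y)$, which lies in $[1,1+\ep')$ by (iv); this is precisely \eqref{eq;key}.

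It remains to bound $\Box(X,X')$. The plan is to use the projection $\pi(x,(B_Y)) := x$, for which $\pi_*\mu_{X'}=\mux$ exactly. Since $\diam A_x^Y \le \ep$ by (iii) and $f_Y$ obeys (i), the triangle inequality applied to $p_{B_Y}, f_Y(x), f_Y(x'), p_{B'_Y}$ gives $d_Y(p_{B_Y},p_{B'_Y}) \le d_X(x,x')+3\ep$, whence $d_X(x,x') \le d_{X'} \le d_X(x,x')+3\ep$ on all of $X'\times X'$. Thus $\pi$ is a $3\ep$-mm-isomorphism (with non-exceptional domain $X'$ itself), and Lemma~\ref{lem;mmisombox}(1) yields $\Box(X,X')\le 9\ep$. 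The step I expect to require the most care is exactly this distortion estimate: it simultaneously uses the $\ep$-slack in (i) and the cell-size bound in (iii), which are precisely the features that must conspire to make $X'$ cheap in box distance while still allowing $g_Y$ to factor through the refined partition $\cA'_Y$ with the sharp measure ratio in \eqref{eq;key}.
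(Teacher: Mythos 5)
Your construction is exactly the one in the paper: your $X'$ indexed by pairs $(x,(B_Y)_Y)$ with $B_Y\subset A_x^Y$ is the paper's $\bigsqcup_{x}\prod_{Y}\cA'(A_{Y,x})$, with the same max-distance, the same product weights (note $A_x^Y\cap U'_Y=\bigcup\cA'(A_x^Y)$ by (vi)), the same $3\ep$-distortion estimate for the projection $\pi$, and the same computation reducing \eqref{eq;key} to $\muy(A^*)/\muy(A^*\cap U'_Y)\in[1,1+\ep')$ via (iv). The argument is correct as written.
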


\begin{proof}
For any $x\in X$ and $Y\in\cY$, there exists a unique $A_{Y, x} \in\cA_Y$ with $f_Y (x) \in A_{Y, x}$.
We put
\[
\cA'(A) := \{ B\in\cA'_Y : B\subset A \}
\]
for a subset $A\subset Y$
and define
\[
X'_x := \prod_{Y\in\cY} \cA'(A_{Y, x}) \text{ for } x\in X \quad\text{ and }\quad X' := \bigsqcup_{x\in X} X'_x.
\]

We choose a point $\pt_B\in B$ for each $B\in\bigcup_{Y\in\cY} \cA'_Y$ and define
\begin{align*}
d_{X'} (\phi, \phi') &:= \max\left\{ \max_{Y\in\cY} d_Y (\pt_{\phi(Y)}, \pt_{\phi'(Y)}), \, d_X (x, x') \right\},\\
\mu_{X'} (\{\phi\}) &:= \mux(\{x\}) \frac{ \prod_{Y\in\cY} \muy(\phi(Y)) }{ \prod_{Y\in\cY} \muy(\bigcup\cA'(A_{Y, x})) }
\end{align*}
for $\phi\in X'_x$ and $\phi' \in X'_{x'}$ with $x, x' \in X$.
Then $X'=(X', d_{X'}, \mu_{X'})$ is a finite mm-space, because
$\mu_{X'} (X'_x) =\mux(\{x\})$ for any $x\in X$ and
\[
\mu_{X'} (X') = \sum_{x\in X} \mu_{X'} (X'_x) =\sum_{x\in X} \mux(\{x\})=1.
\]

For any $Y\in\cY$,
the map $g_Y :X'\to Y$ defined by $g_Y (\phi) = \pt_{\phi(Y)}$ is $1$-Lipschitz and satisfies $g_Y (X)\subset U_Y \cap U'_Y$.

The natural map $\pi:X' \to X$ defined by $\pi(\phi)=x$ if $x\in X$ and $\phi\in X'_x$ satisfies $\pi_* \mu_{X'} =\mux$.
For any $Y\in\cY$, $\phi\in X'_x$, and $\phi' \in X'_{x'}$ with $x, x' \in X'$,
we also have
\begin{align*}
d_Y (\pt_{\phi(Y)}, \pt_{\phi'(Y)})
&\le d_Y (\pt_{\phi(Y)}, f_Y (x)) + d_Y (f_Y (x), f_Y (x')) + d_Y (f_Y (x'), \pt_{\phi'(Y)}) \\
&\le d_X (x, x') +3\ep
\end{align*}
and hence
\begin{align}\label{ineq;3ep}
d_X (x, x')
\le d_{X'} (\pi(x), \pi(x'))
= d_{X'} (\phi, \phi')
\le d_X (x, x') +3\ep.
\end{align}
Thus Lemma~\ref{lem;mmisombox} yields $\Box(X, X')\le 9\ep$.

Finally, we have
\[
\muy(A)-\muy(\bigcup\cA'(A))=\muy(A\setminus U'_Y) \le \muy(Y\setminus U'_Y) < \frac{\ep'}{1+\ep'} \muy(A),
\]
and
\[
(g_Y)_* \mu_{X'} (B)
= \mu_{X'} (g_Y^{-1}(B))
=\mux(f_Y^{-1}(A)) \frac{\muy(B)}{\muy(\bigcup\cA'(A))}
=\frac{\muy(A)}{\muy(U_Y)} \frac{\muy(B)}{\muy(\bigcup\cA'(A))}
\]
for any $A\in\cA_Y$ and $B\in\cA'(A)$.
Thus Inequality~\eqref{eq;key} follows.
This completes the proof of Lemma~\ref{lem;key}.
\end{proof}

\begin{lem}\label{lem;X'}
Let $X$ be a finite mm-space, $\cY\subset\cX$ a finite subset, and $\ep, \ep'>0$ with $\ep < 1$.
Suppose that any $Y\in\cY$ admits a map $g_Y: X\to Y$ and a finite set $\cA_Y$ of disjoint Borel sets of $Y$ with
\begin{enumerate}
\item $g_Y (X)\subset U_Y$, where $U_Y :=\bigcup\cA_Y$,
\item $d_Y (g_Y (x), g_Y (x')) \le d_X (x, x')+\ep'$ for any $x, x' \in X$, and
\item $\muy(A)>0$ and $(g_Y)_* \mux(A) \le (1+\ep)\muy(A)$ for any $A\in\cA_Y$.
\end{enumerate}
Then there exists a finite mm-space $X'$ with $\Box(X, X')\le 3\ep$ for which
any $Y\in\cY$ admits a map $f_Y :X'\to Y$ with
\begin{enumerate}
\setcounter{enumi}{3}
\item $f_Y (X')\subset U_Y$,
\item\label{item;epLip} $d_Y (f_Y (x), f_Y (x')) \le d_{X'} (x, x')+\ep'$ for any $x, x' \in X'$, and
\item $(f_Y)_* \mu_{X'} (A) = \muy(A)/\muy(U_Y)$ for any $A\in\cA_Y$.
\end{enumerate}
\end{lem}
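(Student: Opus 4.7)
The plan is to construct $X'$ as a disjoint union $X \sqcup X''$, where the measure on $X$ is rescaled by $c := (1+\ep)^{-1}$ and the auxiliary finite space $X''$ absorbs, for each $Y$, the ``deficit'' needed to turn the approximate pushforwards $(g_Y)_*\mux$ into the prescribed probabilities $q_A^Y := \muy(A)/\muy(U_Y)$ on $\cA_Y$.

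Write $r_A^Y := (g_Y)_*\mux(A)$; by (1) we have $\sum_{A \in \cA_Y} r_A^Y = 1 = \sum_{A} q_A^Y$, and hypothesis (3) together with $\muy(U_Y)\le 1$ gives
\[
r_A^Y \;\le\; (1+\ep)\muy(U_Y)\, q_A^Y \;\le\; (1+\ep)\, q_A^Y.
\]
Hence $\nu^Y(A) := q_A^Y - c\, r_A^Y \ge 0$ with $\sum_A \nu^Y(A) = 1 - c = \ep/(1+\ep)$. Take $X'' := \prod_{Y \in \cY}\cA_Y$, fix a representative $\pt_A \in A$ for every $A \in \bigcup_{Y}\cA_Y$, let $X' := X \sqcup X''$, and put
\[
\mu_{X'}(\{p\}) := c\, \mux(\{p\}) \text{ for } p \in X, \qquad \mu_{X'}(\{p\}) := \frac{\ep}{1+\ep} \prod_{Y \in \cY} \frac{(1+\ep)\,\nu^Y(p_Y)}{\ep} \text{ for } p \in X''.
\]
Define $f_Y : X' \to Y$ by $f_Y|_X := g_Y$ and $f_Y(p) := \pt_{p_Y}$ for $p = (p_Y)_Y \in X''$, and endow $X'$ with the metric that coincides with $d_X$ on $X\times X$, with $\max_{Y \in \cY} d_Y(\pt_{p_Y}, \pt_{p'_Y})$ on $X''\times X''$, and with a sufficiently large constant $D$ between $X$ and $X''$ to guarantee the triangle inequality.

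Property (4) is immediate from (1). Since the product measure on $X''$ has $Y$-marginal $\nu^Y$ and the sets in $\cA_Y$ are disjoint, $(f_Y)_*\mu_{X'}(A) = c\, r_A^Y + \nu^Y(A) = q_A^Y$, which is (6). Condition (5) reduces to (2) on $X\times X$, is immediate on $X''\times X''$ (since $\max_{Y'} d_{Y'} \ge d_Y$), and holds on the mixed case provided $D \ge \max_{x, Y, A} d_Y(g_Y(x),\pt_A)$. To establish $\Box(X, X')\le 3\ep$, I apply Lemma~\ref{lem;mmisombox}(1) to the Borel map $\pi:X'\to X$ defined by $\pi|_X := \mathrm{id}_X$ and $\pi \equiv x_0$ on $X''$ for a fixed $x_0 \in X$: the non-exceptional domain $\tilde X' := X$ has $\mu_{X'}(\tilde X') = c \ge 1-\ep$, the distance distortion on $\tilde X'$ vanishes, and
\[
\pi_* \mu_{X'}(B) = c\,\mux(B) + (1-c)\mathbf{1}_{x_0 \in B}
\]
differs from $\mux(B)$ in absolute value by at most $1 - c = \ep/(1+\ep) < \ep$ for every Borel $B\subset X$, so $\proh(\pi_*\mu_{X'}, \mux) < \ep$ and $\pi$ is an $\ep$-mm-isomorphism. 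The one delicate step is the positivity $\nu^Y \ge 0$, which is precisely where the hypothesis $(g_Y)_*\mux(A) \le (1+\ep)\muy(A)$ is used; everything else is routine bookkeeping.
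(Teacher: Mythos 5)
Your proposal is correct and follows essentially the same construction as the paper: adjoin to $X$ a disjoint finite product space mapped to representative points $\pt_A$, downscale the measure on $X$, and let the new part carry exactly the deficit measures $\nu^Y$ so that the pushforwards become $\muy(A)/\muy(U_Y)$; the positivity of $\nu^Y$ via hypothesis $(3)$ and the $\ep$-mm-isomorphism argument for $\Box(X,X')\le 3\ep$ match the paper's reasoning (the paper scales by $1-\ep$ and uses the inclusion $X\hookrightarrow X'$ where you scale by $(1+\ep)^{-1}$ and use the retraction, but these are immaterial variants). No gaps.
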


\begin{proof}
We fix $x_0\in X$ and choose some $\pt_A \in A$ for each $A\in\cA_Y$ with
\[
g_Y (x_0) \in Y', \text{ where } Y':= \{ \pt_A : A\in\cA_Y \} \text{ for any } Y\in\cY.
\]
Then we define a probability measure $\mu_{Y'}$ on $Y'$ by
\[
\mu_{Y'} (\{\pt_A\}) := \frac{1}{\ep}\left(\frac{\muy(A)}{\muy(U_Y)}-(1-\ep)(g_Y)_* \mux(A)\right)
\]
for $A\in\cA_Y$ and note that
\[
\mu_{Y'} (\{\pt_A\}) \ge \frac{1}{\ep}(1-(1-\ep)(1+\ep))\muy(A) =\ep\muy(A)>0
\]
for any $A\in\cA_Y$ and $\mu_{Y'} (Y') = 1$.
Thus $(Y', d_Y, \mu_{Y'})$ is a finite mm-space.

We equip $X'' := \prod_{Y\in\cY} Y'$ with the $\ell_\infty$-product distance $d_{X''}$
and define
\[
d_{X'} (x, x') = d_{X'} (x', x) :=
\begin{cases}
\ d_Z (x, x') & \text{ if } x, x' \in Z=X \text{ or } X'',  \\
\ d_X (x, x_0) + \diam X'' & \text{ if } x\in X \text{ and } x' \in X''.
\end{cases}
\]
Then we define a probability measure $\mu_{X'}$ on $X' := X\sqcup X''$ by
\[
\mu_{X'}|_{X} =(1-\ep)\mux \quad\text{ and }\quad \mu_{X'}|_{X''} = \ep\bigotimes_{Y\in\cY} \mu_{Y'}
\]
to make $X'$ an mm-space.

The inclusion $\iota : X\hookrightarrow X'$ is an $\ep$-mm-isomorphism because it is isometric,
\[
\iota_* \mux(B) = \mux(B \cap X) \le (1-\ep)\mux(B \cap X) + \ep \le \mu_{X'}(B) + \ep
\]
for any Borel set $B\subset X'$, and hence $\proh(\iota_* \mux, \mu_{X'})\le\ep$.
Thus Lemma~\ref{lem;mmisombox} yields $\Box(X, X')\le 3\ep$.

We fix $Y\in\cY$ and define a map $f=f_Y: X'\to Y$ by $f|_X = g_Y$ and $f|_{X''} = \pr_{Y'}$,
where $\pr_{Y'}: X''\to Y'$ is the projection.
In order to check Condition~\eqref{item;epLip},
we only have to verify it for $x \in X$ and $x' \in X''$ and actually have
\begin{align*}
 d_Y (f(x), f(x'))
& = d_Y(g_Y(x), \pr_Y(x')) \\
& \le d_Y(g_Y(x), g_Y (x_0)) + d_Y(g_Y (x_0), \pr_Y(x')) \\
& \le d_X(x, x_0) +\ep' + \diam{X''}
= d_{X'} (x, x')+\ep'.
\end{align*}
We also have $f(X') \subset U_Y$ and
\[
f_* \mu_{X'} (A) = (1-\ep) (g_Y)_*\mux (A) + \ep \mu_{Y'} (\pr_{Y'}^{-1} (A)) =\frac{\muy(A)}{\muy(U_Y)}
\]
for any $A\in\cA_Y$.
This completes the proof of Lemma~\ref{lem;X'}.
\end{proof}

We use the following in the final step of our proof of Theorem~\ref{thm;main}.
\begin{lem}[cf.~{\cite[Lemma~4.39]{S}}]\label{lem;final}
Let $\seqn{\ep_n}$ be a sequence of positive numbers with $\ep_n \to 0$ as $\nti$.
If sequences $\seqn{X_n}$ and $\seqn{Y_n}$ of mm-spaces $\Box$-converge to mm-spaces $X$ and $Y$ respectively and $Y_n \prec_{\ep_n} X_n$ for any $n$,
then $Y\prec X$.
\end{lem}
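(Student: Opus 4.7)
The plan is to build approximate $1$-Lipschitz maps $g_n : X \to Y$ whose graph-couplings on $X \times Y$ have a weak limit concentrated on the graph of a genuine $1$-Lipschitz map realising $Y \prec X$.

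First, I would convert the given data into maps between the fixed spaces $X$ and $Y$. Using Lemma~\ref{lem;mmisombox}, the box convergences $X_n \to X$ and $Y_n \to Y$ provide $\eta_n$-mm-isomorphisms $\alpha_n : X \to X_n$ and $\beta_n : Y_n \to Y$ with $\eta_n \to 0$. Setting $g_n := \beta_n \circ f_n \circ \alpha_n : X \to Y$, where $f_n : X_n \to Y_n$ witnesses $Y_n \prec_{\ep_n} X_n$, a careful chase through the three non-exceptional domains and through the Prohorov inequalities produces a Borel subset $\tX^{(n)} \subset X$ with $\mux(\tX^{(n)}) \ge 1 - \delta_n$ on which $d_Y(g_n(x), g_n(x')) \le d_X(x, x') + \delta_n$, together with $\proh((g_n)_* \mux, \muy) \le \delta_n$, for some sequence $\delta_n \to 0$. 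This bookkeeping is tedious but routine.

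Next, I would form the couplings $\pi_n := (\mathrm{id}_X, g_n)_* \mux$ on $X \times Y$. The $X$-marginal is $\mux$, and the $Y$-marginal $(g_n)_* \mux$ converges to $\muy$, so the family $\{(g_n)_*\mux\}$ is tight. Hence $\{\pi_n\}$ is tight on $X \times Y$, and Prokhorov's theorem gives a weakly convergent subsequence with limit $\pi$ whose marginals are $\mux$ and $\muy$. The crux is to show that $\pi$ is supported on the graph of a $1$-Lipschitz function. For $\ep > 0$ the set
\[
A_\ep := \{((x,y), (x',y')) \in (X \times Y)^2 : d_Y(y, y') > d_X(x, x') + \ep\}
\]
is open, and the $(1, \delta_n)$-Lipschitz bound on $\tX^{(n)}$ forces $(\pi_n \otimes \pi_n)(A_\ep) \le 2\mux(X \setminus \tX^{(n)}) \to 0$ as soon as $\delta_n < \ep$. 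The Portmanteau theorem applied to $\pi \otimes \pi$ then yields $(\pi \otimes \pi)(A_\ep) = 0$ for every $\ep > 0$, so $\mathrm{supp}(\pi)$ lies in $\{((x,y),(x',y')) : d_Y(y,y') \le d_X(x,x')\}$. Taking $x = x'$ in this estimate forces any two points of $\mathrm{supp}(\pi)$ above a common $x \in X$ to coincide, so $\mathrm{supp}(\pi)$ is the graph of a $1$-Lipschitz function $f$ defined on a $\mux$-conull subset of $X$; the $Y$-marginal condition then gives $f_* \mux = \muy$.

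The main technical point is the passage from the almost-everywhere-defined $1$-Lipschitz function furnished by the coupling to a $1$-Lipschitz map on all of $X$, as required by the definition of $\prec$. Since $f$ is uniformly continuous on a $\mux$-conull set and $Y$ is complete, $f$ extends uniquely by continuity to $\overline{\mathrm{supp}(\mux)}$, in line with the standard convention of identifying an mm-space with the support of its measure. An alternative, more structural route would be via pyramids: if one can show that $\Box(Y_n, \cP_{X_n}) = O(\ep_n)$ by approximating $f_n$ by a genuine $1$-Lipschitz map into a small perturbation of $Y_n$, then the weak convergence $\cP_{X_n} \to \cP_X$ combined with the closedness of $\cP_X$ would give $Y \in \cP_X$ at once.
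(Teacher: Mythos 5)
Your overall strategy differs from the paper's: you argue directly via graph couplings on $X\times Y$, whereas the paper deduces Lemma~\ref{lem;final} from Proposition~\ref{prop;indep}, first upgrading $Y_n\prec_{\ep_n}X_n$ to a genuine domination $Z_n\prec X_n$ of a nearby space with $\Box(Y_n,Z_n)\le 4\ep_n$ (Lemma~\ref{lem;K}) and then invoking the weak convergence of the pyramids $\cP_{X_n}\to\cP_X$ --- this is exactly the ``alternative, more structural route'' you mention in your last paragraph. The second half of your argument (tightness of $\pi_n$, Portmanteau applied to the open sets $A_\ep$, the support of the limit coupling being the graph of a $1$-Lipschitz map on a $\mux$-conull subset of the support of $\mux$) is sound as far as it goes.

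The gap is in the step you dismiss as ``tedious but routine.'' Composing $\alpha_n$, $f_n$, $\beta_n$ requires a set $\tX^{(n)}\subset X$ of large measure on which $\alpha_n$ lands in the non-exceptional domain $\widetilde{X}_n$ of $f_n$ and $f_n\circ\alpha_n$ lands in the non-exceptional domain of $\beta_n$. But the only control on $(\alpha_n)_*\mux$ coming from Lemma~\ref{lem;mmisombox} is the Prohorov bound $\proh((\alpha_n)_*\mux,\mu_{X_n})\le\eta_n$, and this does not bound $\mux(\alpha_n^{-1}(\widetilde{X}_n))$ from below: the exceptional set $X_n\setminus\widetilde{X}_n$ may have $\mu_{X_n}$-measure at most $\ep_n$ and yet be $\eta_n$-dense in $X_n$, in which case the Prohorov inequality $(\alpha_n)_*\mux(X_n\setminus\widetilde{X}_n)\le\mu_{X_n}(U_{\eta_n}(X_n\setminus\widetilde{X}_n))+\eta_n$ is vacuous and $(\alpha_n)_*\mux$ can in principle be concentrated on $X_n\setminus\widetilde{X}_n$. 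Since $f_n$ is completely uncontrolled off $\widetilde{X}_n$, no approximate Lipschitz bound for $g_n$ on a large-measure set survives; the same problem recurs when you push through $\beta_n$. This is precisely why arguments of this type in the literature (\cite[Lemma~4.39]{S}, \cite{N}) are carried out in the parameter/coupling formulation of $\Box$, where all exceptional sets live on a common parameter space and combine additively, rather than by composing $\ep$-mm-isomorphisms. Your argument can be repaired along those lines, or by first applying Lemma~\ref{lem;K} as the paper does and then using the quoted weak convergence of pyramids.
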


While Lemma~\ref{lem;final} follows from Lemma~\ref{lem;mmisombox} and \cite[Lemma~4.39]{S},
cf.~\cite{N},
it also follows from Proposition~\ref{prop;indep} below.

\begin{defn}
For pyramids $\cP, \cQ$ and $\ep>0$,
we write $\cQ\prec_\ep \cP$ if
any $Y\in\cQ$ admits $X\in\cP$ with $Y\prec_\ep X$.
\end{defn}

\begin{prop}\label{prop;indep}
Let $\seqn{\ep_n}$ be a sequence of positive numbers with $\ep_n \to 0$ as $\nti$.
If sequences $\seqn{\cP_n}$ and $\seqn{\cQ_n}$ of pyramids converge weakly to pyramids $\cP$ and $\cQ$ respectively
and $\cQ_n \prec_{\ep_n} \cP_n$ for any $n$, then $\cQ\subset\cP$.
\end{prop}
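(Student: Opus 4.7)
The plan is to show $Y\in\cP$ for every $Y\in\cQ$. By weak convergence $\cQ_n\to\cQ$, I first pick $Y_n\in\cQ_n$ with $\Box(Y_n,Y)\to 0$, and the hypothesis $\cQ_n\prec_{\ep_n}\cP_n$ then yields $X_n\in\cP_n$ with $Y_n\prec_{\ep_n}X_n$, realized by Borel maps $f_n\colon X_n\to Y_n$ and non-exceptional domains $\tilde X_n\subset X_n$. It then suffices to produce $Z_n\in\cP_n$ with $\Box(Z_n,Y)\to 0$: if $Y\notin\cP$, condition~(2) of the weak convergence $\cP_n\to\cP$ would give $\liminf_n\Box(Y,\cP_n)>0$, contradicting $\Box(Y,\cP_n)\le\Box(Y,Y_n)+\Box(Y_n,Z_n)\to 0$.

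The heart of the proof is a lemma that converts $\prec_{\ep_n}$ into a genuine $\prec$: from $Y_n\prec_{\ep_n}X_n$ one constructs $Z_n$ with $Z_n\prec X_n$ (so $Z_n\in\cP_{X_n}\subset\cP_n$ by the pyramid property of $\cP_n$) and $\Box(Z_n,Y_n)\le C\ep_n$ for a universal constant $C$. My candidate for $Z_n$ is the image $f_n(X_n)\subset Y_n$ endowed with the pushforward measure $(f_n)_*\mu_{X_n}$ and the quotient metric induced from $d_{X_n}$ by $f_n$, so that the canonical map $X_n\to Z_n$ is $1$-Lipschitz and measure-preserving by construction, yielding $Z_n\prec X_n$. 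To bound $\Box(Z_n,Y_n)$, I would apply Lemma~\ref{lem;mmisombox}(1) to the set-theoretic inclusion $\iota\colon Z_n\hookrightarrow Y_n$ with non-exceptional domain $f_n(\tilde X_n)$: the Prohorov bound comes from $\proh((f_n)_*\mu_{X_n},\mu_{Y_n})\le\ep_n$, and the metric bound from the $(1,\ep_n)$-Lipschitz property of $f_n|_{\tilde X_n}$.

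The delicate point will be checking that the quotient metric on $Z_n$ stays sufficiently close to $d_{Y_n}$ on the good image $f_n(\tilde X_n)$. The naive quotient distance $d_{Z_n}(z,z')=\inf\{d_{X_n}(x,x'):f_n(x)=z,\ f_n(x')=z'\}$ can shrink well below $d_{Y_n}(z,z')$ when the infimum is attained at preimages in the exceptional set $X_n\setminus\tilde X_n$, as one sees from simple examples with a few ``bridging'' atoms in $X_n$ that violate the Lipschitz condition globally but only on a set of small measure. Resolving this will likely require either restricting the identifications to $\tilde X_n$-preimages (and carefully metrizing the resulting pseudometric so that triangle inequality holds without excessive collapse), or an auxiliary argument showing that the pairs $(z,z')$ for which $d_{Z_n}(z,z')$ is much smaller than $d_{Y_n}(z,z')$ carry only $O(\ep_n)$ pushforward measure, so that they can be absorbed into the non-exceptional domain of $\iota$. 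This is the main technical obstacle on which the whole argument hinges.
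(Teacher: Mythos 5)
Your overall reduction is exactly the one the paper uses: choose $Y_n\in\cQ_n$ with $\Box(Y_n,Y)\to 0$, get $X_n\in\cP_n$ with $Y_n\prec_{\ep_n}X_n$, replace $X_n$ by some $Z_n\prec X_n$ (hence $Z_n\in\cP_n$ by the first pyramid axiom) with $\Box(Z_n,Y_n)=O(\ep_n)$, and conclude $Y\in\cP$ from the weak convergence of $\seqn{\cP_n}$. The entire content of the proposition is therefore concentrated in the replacement step, which the paper does not prove but imports as Lemma~\ref{lem;K} from \cite[Lemma~4.6]{K}: if $Y\prec_\ep X$ then there is $Z\prec X$ with $\Box(Y,Z)\le 4\ep$. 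This is precisely the lemma you set out to prove from scratch, and your proof of it is not complete: you explicitly leave open ``the main technical obstacle on which the whole argument hinges.'' As written, that is a genuine gap, not a deferrable detail.

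Moreover, neither of your two proposed repairs is likely to close it as stated. The quotient pseudometric $d_{Z_n}(z,z')=\inf\{d_{X_n}(x,x'):f_n(x)=z,\ f_n(x')=z'\}$ need not satisfy the triangle inequality, so one must pass to chain infima $\inf\sum_i d_{X_n}(a_i,b_i)$ over chains with $f_n(b_i)=f_n(a_{i+1})$; but the hypothesis $d_{Y_n}(f_n(a),f_n(b))\le d_{X_n}(a,b)+\ep_n$ only yields $d_{Y_n}(z,z')\le\sum_i d_{X_n}(a_i,b_i)+k\ep_n$ for a chain of length $k$, and $k$ is unbounded, so the chain quotient can collapse distances by much more than $O(\ep_n)$ \emph{even if all preimages lie in $\tilde X_n$}. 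Restricting identifications to $\tilde X_n$ therefore does not help, and it also breaks the claim $Z_n\prec X_n$, since the canonical $1$-Lipschitz surjection would then only be defined on $\tilde X_n$. Your second repair is also doubtful: whether a pair $(z,z')$ gets collapsed depends on the mere \emph{existence} of a bad preimage, not on its measure, so a single exceptional atom of measure $\ep_n$ can collapse pairs carrying measure bounded away from $0$. The construction in \cite[Lemma~4.6]{K} avoids the quotient entirely (roughly, one modifies the metric on the parameter space of $X$ itself rather than on the image), and some such different idea is genuinely needed here. So: correct skeleton, identical to the paper's, but the key lemma is unproven and your sketch for it does not go through.
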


To prove Proposition~\ref{prop;indep}, we use the following.
\begin{lem}[{\cite[Lemma~4.6]{K}}]\label{lem;K}
If $X$ and $Y$ are mm-spaces with $Y \prec_\ep X$ for some $\ep > 0$,
then there exists an mm-space $Z$ with $Z\prec X$ and $\Box(Y, Z) \le 4\ep$.
\end{lem}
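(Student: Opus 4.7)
\emph{Plan.} The idea is to build $Z$ as a metric quotient of $X$ whose distance reflects the $Y$-geometry through $f$, so that the natural projection $X\to Z$ is $1$-Lipschitz and measure-preserving (giving $Z\prec X$) while $Z$ remains close to $Y$. Writing $f\colon X\to Y$ and $\tX\subset X$ with $\mux(\tX)\ge 1-\ep$ for the data of $Y\prec_\ep X$, a natural candidate is to equip the underlying set of $X$ with the pseudometric
\[
d_Z(x, x') := \inf\left\{\sum_{i=0}^{n-1} \min\bigl(d_X(x_i, x_{i+1}),\, d_Y(f(x_i), f(x_{i+1})) + 2\ep\bigr) : x_0=x,\, x_n=x'\right\},
\]
the largest pseudometric on $X$ bounded pointwise by $\min(d_X,\, d_Y\circ f + 2\ep)$. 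Identify points at $d_Z$-distance zero, take the pushforward of $\mux$ as $\muz$, and let $Z$ be the resulting mm-space. The quotient projection $\pi\colon X\to Z$ is $1$-Lipschitz (since $d_Z\le d_X$) and satisfies $\pi_*\mux = \muz$, so $Z\prec X$.

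To estimate $\Box(Y, Z)$, consider the Borel map $\bar f\colon Z\to Y$ characterised by $\bar f\circ\pi = f$. By construction $\bar f_*\muz = f_*\mux$ is $\ep$-close to $\muy$ in Prohorov distance by hypothesis, and the bound $d_Z\le d_Y\circ f + 2\ep$ built into $d_Z$ yields the one-sided inequality $d_Y(\bar f(z), \bar f(z'))\ge d_Z(z, z') - 2\ep$. Producing a non-exceptional domain in $Z$ on which also $d_Y\circ f\le d_Z + O(\ep)$ would certify $\bar f$ as an $O(\ep)$-mm-isomorphism, and Lemma~\ref{lem;mmisombox} would then deliver $\Box(Y, Z)\le 4\ep$.

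\emph{Main obstacle.} The delicate step is the reverse inequality $d_Z\ge d_Y\circ f - O(\ep)$ on a non-exceptional domain. The chain-infimum definition of $d_Z$ admits, in principle, arbitrarily long chains that exploit the $d_X$-branch of the $\min$ to collapse the cost: even with every intermediate point in $\tX$, the per-step bound $d_X\ge d_Y\circ f - \ep$ (a rearrangement of the hypothesis) accumulates $n\ep$ of error when summed, which may drive $d_Z$ far below $d_Y\circ f$ as $n$ grows. Overcoming this requires a substantial refinement---for example, restricting to chains whose effective length is controlled by the measure of their pieces via a separability/regularity argument for $\mux$, or else replacing the chain-infimum construction altogether with a more rigid one in which only one-step (or uniformly bounded-step) paths are admissible on a suitable sub-domain of $\tX$ of measure $\ge 1-2\ep$. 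This combinatorial-measure-theoretic control is the technical core of the argument of \cite[Lemma~4.6]{K}.
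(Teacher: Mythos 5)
The paper offers no proof of this lemma at all --- it is imported verbatim from \cite[Lemma~4.6]{K} --- so the only question is whether your argument stands on its own, and it does not. By your own account the entire content of the statement sits in the step you leave open: the lower bound $d_Z\ge d_Y\circ f-O(\ep)$ on a large-measure domain, without which $\bar f$ is not an $O(\ep)$-mm-isomorphism and no estimate on $\Box(Y,Z)$ follows. Worse, for the pseudometric exactly as you define it that lower bound is genuinely false, not merely hard: the branch $d_Y\circ f+2\ep$ of the $\min$ is available for \emph{all} pairs, while the hypothesis relates $d_X$ to $d_Y\circ f$ only on $\tX$. A two-step chain $x\to w\to x'$ with $x,x'\in\tX$ and $w\notin\tX$ costs at most $d_X(x,w)+d_Y(f(w),f(x'))+2\ep$, which can remain of order $\ep$ while $d_Y(f(x),f(x'))$ is arbitrarily large, since nothing controls $f$ on $X\setminus\tX$.

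The obstacle you do identify --- accumulation of $n\ep$ along long chains inside $\tX$ --- is, by contrast, illusory, and the fix is a short grouping argument rather than any ``combinatorial-measure-theoretic control''. Redefine the one-step cost to be $d_X(x,x')$ unless both points lie in $\tX$, in which case it is $\min(d_X(x,x'),\,d_Y(f(x),f(x'))+2\ep)$. For a chain between points of $\tX$, group the consecutive steps using the $d_X$-branch into maximal runs; each run then has both endpoints in $\tX$, so the triangle inequality in $X$ followed by a \emph{single} application of the hypothesis bounds its cost below by $d_Y(f(\cdot),f(\cdot))-\ep$ between its endpoints. If there are $k$ runs, they are separated by at least $k-1$ steps of the $d_Y$-branch, each carrying a $+2\ep$ surcharge, so the total error is at least $(k-2)\ep\ge-\ep$ and the chain costs at least $d_Y(f(x),f(x'))-\ep$. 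This gives $|d_Z-d_Y\circ f|\le 2\ep$ on $\tX$ and $\proh(\bar f_*\muz,\muy)=\proh(f_*\mux,\muy)\le\ep$, i.e.\ a $2\ep$-mm-isomorphism $Z\to Y$ with non-exceptional domain $\pi(\tX)$, whence Lemma~\ref{lem;mmisombox} yields only $\Box(Y,Z)\le 6\ep$ --- enough for the sole application in Proposition~\ref{prop;indep}, but not the stated $4\ep$. Finally, $\bar f$ need not descend to the metric quotient (points with $d_Z=0$ may have $f$-images at positive distance), so you should work with the pseudometric space or fix representatives, and you still owe a word on completeness and separability of $Z$ and on Borel measurability of $\pi$ and $\bar f$.
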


\begin{proof}[Proof of Proposition~\ref{prop;indep}]
Let $Y\in\cQ$.
Then a sequence $\seqn{Y_n}$ of mm-spaces with $Y_n \in\cQ_n$ for any $\niN$ $\Box$-converges to $Y$.
By assumption,
there exists $X_n \in\cP_n$ with $Y_n \prec_{\ep_n} X_n$ for each $\niN$.
By Lemma \ref{lem;K}, for each $\niN$, there exists an mm-space $Z_n$ with $Z_n \prec X_n$ and $\Box(Y_n, Z_n) \le 4\ep_n$. Note that $Z_n \in \cP_n$ for each $\niN$ and $\seqn{Z_n}$ $\Box$-converges to $Y$.
Thus the weak convergence of $\seqn{\cP_n}$ implies $Y \in \cP$.
This means that $\cQ\subset\cP$.
\end{proof}

\begin{cor}[cf.~{\cite[Proposition~2.11]{S}}]
If pyramids $\cP$ and $\cQ$ satisfy $\cP\prec_\ep \cQ$ and $\cQ\prec_\ep \cP$ for any $\ep>0$,
then $\cP=\cQ$.
\end{cor}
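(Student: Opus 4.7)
The plan is to derive this corollary as an immediate consequence of Proposition~\ref{prop;indep}, applied to constant sequences of pyramids. The key observation is that a constant sequence $\cP_n \equiv \cP$ of pyramids converges weakly to $\cP$: condition~(1) of weak convergence is trivial since $\Box(X, \cP) = 0$ whenever $X \in \cP$, and condition~(2) follows because $\cP$ is closed in $(\cX, \Box)$, so $\Box(Y, \cP) > 0$ for every $Y \in \cX \setminus \cP$, making the $\liminf$ strictly positive.

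With this in hand, I would set $\cP_n := \cP$ and $\cQ_n := \cQ$ for every $n \in \N$, and choose any sequence $\ep_n \searrow 0$, for instance $\ep_n := 1/n$. The hypothesis $\cQ \prec_\ep \cP$ for every $\ep > 0$ then gives $\cQ_n \prec_{\ep_n} \cP_n$ for each $n$. Since both constant sequences converge weakly to their common values, Proposition~\ref{prop;indep} applies and yields $\cQ \subset \cP$. Swapping the roles of $\cP$ and $\cQ$ and using the hypothesis $\cP \prec_\ep \cQ$ for every $\ep > 0$ gives $\cP \subset \cQ$ by the same argument. Combining the two inclusions produces $\cP = \cQ$.

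There is really no serious obstacle here; the whole content of the corollary has been packaged into Proposition~\ref{prop;indep}. The only point that deserves a brief check is the verification that constant sequences of pyramids satisfy the weak convergence definition, which uses nothing beyond closedness of pyramids in $(\cX, \Box)$. Accordingly, the write-up should be just a few lines, with no need for further machinery.
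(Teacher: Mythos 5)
Your proposal is correct and follows exactly the route the paper takes: both deduce $\cQ\subset\cP$ and $\cP\subset\cQ$ from Proposition~\ref{prop;indep} applied to constant sequences with $\ep_n\to 0$. Your added verification that a constant sequence of pyramids converges weakly to itself (via closedness of $\cP$ in $(\cX,\Box)$) is a detail the paper leaves implicit, but it is the same argument.
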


\begin{proof}
We have $\cP\subset\cQ$ and $\cQ\subset\cP$ by Proposition~\ref{prop;indep} and hence $\cP=\cQ$.
\end{proof}

Now we are in a position to prove Theorem~\ref{thm;main}.
\begin{proof}[Proof of Theorem~\ref{thm;main}]
We suppose that $\cY\subset\cX$ is precompact in $(\cX, \Box)$ and
fix a decreasing sequence $\seqn{\ep_n}$ of positive numbers $\ep_n>0$ with $\ep_1 <1$ and $\sum_\niN \ep_n <\infty$.

For each $\niN$,
we take a finite set $\cA_{Y, n}$ of Borel sets of $Y\in\overline{\cY}$ and a finite subset $\cY_n \subset\overline{\cY}$ with
\begin{enumerate}
\item $\diam A<\ep_n$, $\muy(A)>0$, and $\muy(\partial A)=0$ for any $A\in\cA_{Y, n}$,
\item $\cO_{Y, n} := \cO(Y, \cA_{Y, n}, \ep_n)$ is as in Corollary~\ref{cor;O} for $Y\in\cY_n$ and $\bigcup_{Y\in\cY_n} \cO_{Y, n} =\cY$,
\item $\cA_{Z, n}$ is as in Corollary~\ref{cor;O} if $Y\in\cY_n$ and $Z\in\cO_{Y, n}$,
\item $B\subset A$ or $A\cap B=\emptyset$ if $Y\in\cY_{n+1}$, $A\in\cA_{Y, n}$, and $B\in\cA_{Y, n+1}$,
\item $\muy(Y\setminus\bigcup\cA_{Y, n+1})<(\ep_{n+1}/(1+\ep_{n+1})) \min\{ \muy(A) : A\in\cA_{Y, n} \}$ for any $Y\in\cY_{n+1}$,
\item $\cY_n \subset\cY_{n+1}$, and $\cY\subset\overline{\bigcup_{n=1}^\infty \cY_n}$,
\end{enumerate}
cf.~\cite[Lemma~42]{OY}.

We note that $U_{Y, n} :=\bigcup\cA_{Y, n} \subset Y$ satisfies $\muy(U_{Y, n})> (1 +\ep_n)^{-1} >1-\ep_n$ for any $Y\in\cY_n$.

\begin{clm}\label{clm}
For any $\niN$,
there exist a finite mm-space $X_n$ with $\Box(X_n, X_{n+1}) \le 48\ep_n$ and
$(1, 3\ep_n)$-Lipschitz maps $f_{Y, n} :X_n \to Y$ in the sense of Definition~\ref{def;CDLip} satisfying
\begin{align}\label{eq;fn}
f_{Y, n} (X_n) \subset U_{Y, n} \quad\text{ and }\quad
(f_{Y, n})_* \mu_{X_n} (A)=\frac{\muy(A)}{\muy(U_{Y, n})}
\end{align}
for any $Y\in\cY_{n+1} \text{ and } A\in\cA_{Y, n}$.
\end{clm}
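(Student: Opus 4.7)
The plan is to establish the claim by induction on $n$, combining Lemma~\ref{lem;step1} (for the base case) with the main inductive engine consisting of Lemma~\ref{lem;key}, Corollary~\ref{cor;O}, and Lemma~\ref{lem;X'} applied in sequence. For $n=1$, I would apply Lemma~\ref{lem;step1} to the finite set $\cY_2$ equipped with the partitions $(\cA_{Y,1})_{Y\in\cY_2}$: this produces a finite mm-space $X_1$ and $1$-Lipschitz maps $f_{Y,1}:X_1\to Y$ with $f_{Y,1}(X_1)\subset U_{Y,1}$ and $(f_{Y,1})_*\mu_{X_1}(A)=\muy(A)/\muy(U_{Y,1})$ for every $A\in\cA_{Y,1}$, and any $1$-Lipschitz map is automatically $(1,3\ep_1)$-Lipschitz.

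For the inductive step, assume $X_n$ and $(f_{Y,n})_{Y\in\cY_{n+1}}$ are given. The construction of $X_{n+1}$ proceeds in three sub-steps. First, I apply Lemma~\ref{lem;key} with $X=X_n$, $\cY=\cY_{n+1}$, $f_Y=f_{Y,n}$, $\cA_Y=\cA_{Y,n}$, $\cA'_Y=\cA_{Y,n+1}$, $\ep=3\ep_n$, $\ep'=\ep_{n+1}$; its hypotheses (1)--(6) are exactly the inductive hypothesis together with setup conditions (1), (4), (5). This yields an intermediate finite mm-space $X''$ with $\Box(X_n,X'')\le 27\ep_n$ and $1$-Lipschitz maps $g_Y:X''\to Y$ for $Y\in\cY_{n+1}$ into $U_{Y,n}\cap U_{Y,n+1}$, and combining its conclusion with $\muy(U_{Y,n})>(1+\ep_n)^{-1}$ gives
\[
(g_Y)_*\mu_{X''}(B)\le (1+\ep_{n+1})(1+\ep_n)\muy(B)\quad\text{for every }B\in\cA_{Y,n+1}.
\]
Second, I extend the family to every $Z\in\cY_{n+2}$: keep $g_Z$ if $Z\in\cY_{n+1}$, otherwise pick $Y(Z)\in\cY_{n+1}$ with $Z\in\cO_{Y(Z),n+1}$ (using the cover property in the setup) and transfer $g_{Y(Z)}$ via Corollary~\ref{cor;O} to obtain $g_Z:X''\to Z$. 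The resulting $g_Z$ is $(1,3\ep_{n+1})$-Lipschitz with image in $U_{Z,n+1}$, and the bijection $\cA_{Z,n+1}\to\cA_{Y(Z),n+1}$ together with the measure ratio bound furnished by the corollary yields
\[
(g_Z)_*\mu_{X''}(A)\le (1+\ep_{n+1})^2(1+\ep_n)\muz(A)\le (1+7\ep_n)\muz(A)
\]
for every $A\in\cA_{Z,n+1}$, using monotonicity of the $\ep_n$ and the elementary inequality $(1+x)^3\le 1+7x$ on $[0,1]$. Third, I feed $(g_Z,\cA_{Z,n+1})_{Z\in\cY_{n+2}}$ into Lemma~\ref{lem;X'} with $\ep=7\ep_n$ and $\ep'=3\ep_{n+1}$, producing $X_{n+1}$ with $\Box(X'',X_{n+1})\le 21\ep_n$ and $(1,3\ep_{n+1})$-Lipschitz maps $f_{Z,n+1}:X_{n+1}\to Z$ satisfying $f_{Z,n+1}(X_{n+1})\subset U_{Z,n+1}$ and the exact push-forward identity $\muz(A)/\muz(U_{Z,n+1})$ on $\cA_{Z,n+1}$. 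The triangle inequality then gives $\Box(X_n,X_{n+1})\le 27\ep_n+21\ep_n=48\ep_n$.

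The hard part is the middle sub-step: Lemma~\ref{lem;key} only yields maps indexed by $\cY_{n+1}$, while the claim at level $n+1$ demands maps indexed by the larger set $\cY_{n+2}$, so the $g_Y$ must be transferred along Corollary~\ref{cor;O} while simultaneously keeping the $1$-Lipschitz property (now degraded by an additive $3\ep_{n+1}$) and the approximate push-forward (now degraded by an extra factor $1+\ep_{n+1}$) under enough control for Lemma~\ref{lem;X'} to apply. The numerical match with the bound $48\ep_n$ is also delicate: the $9\cdot 3\ep_n$ from Lemma~\ref{lem;key} and $3\cdot 7\ep_n$ from Lemma~\ref{lem;X'} add up precisely, and the factor $7$ relies on $\ep_n<1$ (which can be arranged from the outset while retaining $\sum\ep_n<\infty$).
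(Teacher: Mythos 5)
Your proposal is correct and follows essentially the same route as the paper: Lemma~\ref{lem;step1} for the base case, then Lemma~\ref{lem;key} (with $\ep=3\ep_n$, giving $27\ep_n$), Corollary~\ref{cor;O} to extend the maps beyond $\cY_{n+1}$, and Lemma~\ref{lem;X'} (with $\ep=7\ep_n$, giving $21\ep_n$) in the inductive step, with the same accounting $27\ep_n+21\ep_n=48\ep_n$. The only cosmetic difference is that you transfer the maps only to $Z\in\cY_{n+2}\setminus\cY_{n+1}$ rather than to all of $\overline{\cY}\setminus\cY_{n+1}$, which is exactly what the claim requires.
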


\begin{proof}
We start by using Lemma~\ref{lem;step1} to obtain a finite mm-space $X_1$ and
$1$-Lipschitz maps $f_{Y, 1}:X_1 \to Y$ satisfying \eqref{eq;fn} with $n=1$ for any $Y\in\cY_2$.
Then we suppose that we have a finite mm-space $X_k$ and
$(1, 3\ep_k)$-Lipschitz maps $f_{Y, k}:X_k \to Y$ satisfying \eqref{eq;fn} with $n=k\in\N$ for any $Y\in\cY_{k+1}$ and $A\in\cA_{Y, k}$.

We use Lemma~\ref{lem;key} to obtain a finite mm-space $X'_k$ with $\Box(X_k, X'_k) \le 27\ep_{k}$ and
$1$-Lipschitz maps $g_{Y, k}: X'_k:\to Y$ with
\[
g_{Y, k} (X'_k) \subset U_{Y, k+1} \quad\text{ and }\quad
\muy(U_{Y, k}) \frac{(g_{Y, k})_* \mu_{X'_k} (B)}{\muy(B)} <1+\ep_{k+1}
\]
for any $Y\in\cY_{k+1} \text{ and } B\in\cA_{Y, k+1}$.

Then we use Corollary~\ref{cor;O} to obtain $(1, 3\ep_{k+1})$-Lipschitz maps $g_{Z, k}: X'_k \to Z$ with
\[
g_{Z, k} (X'_k) \subset U_{Z, k+1} \quad\text{ and }\quad
\frac{(g_{Z, k})_* \mu_{X'_k} (B)}{\muz(B)} <(1+\ep_k)^3 <1+ 7\ep_{k}
\]
for any $Z\in\overline{\cY}\setminus\cY_{k+1} \text{ and } B\in\cA_{Z, k+1}$.

Finally, we use Lemma~\ref{lem;X'} to obtain a finite mm-space $X_{k+1}$ with $\Box(X'_k, X_{k+1}) \le 21\ep_k$
and $(1, 3\ep_{k+1})$-Lipschitz maps $f_{Y, k+1}: X_{k+1} \to Y$ satisfying \eqref{eq;fn} with $n=k+1$
for any $Y\in\cY_{k+2} \text{ and } A\in\cA_{Y, k+1}$.

Then
\[
\Box(X_k, X_{k+1}) \le \Box(X_k, X'_k) +\Box(X'_k, X_{k+1}) \le 48\ep_k
\]
and Claim~\ref{clm} is verified.
\end{proof}

We obtained a sequence $\seqn{X_n}$ of mm-spaces and maps $f_{Y, n} :X_n \to Y$ as in Claim~\ref{clm}.
Lemma~\ref{lem;P} yields $\cY_{n+1} \prec_{3\ep_n, 0} X_n$ for any $\niN$.
Since $\sum_\niN \Box(X_n, X_{n+1}) <\infty$,
the sequence $\seqn{X_n}$ is Cauchy and converges to some mm-space $X$ in $(\cX, \Box)$.
Then $\cY\prec X$ by Lemma~\ref{lem;final}.
Now our proof of Theorem~\ref{thm;main} is complete.
\end{proof}

\begin{cor}\label{cor;main}
For any subset $\cY\subset\cX$, the following are equivalent.
\begin{enumerate}
\item\label{item1} There exists an mm-space $X$ with $\cY\prec X$.
\item\label{item2} For any $\ep>0$, there exists an mm-space $X$ with $Y\prec_\ep X$ for any $Y\in\cY$.
\item\label{item3} $\cY$ is $\Box$-precompact.
\end{enumerate}
\end{cor}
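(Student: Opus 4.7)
The plan is to establish the cycle $(3)\Rightarrow(1)\Rightarrow(2)\Rightarrow(3)$. The implication $(3)\Rightarrow(1)$ is precisely Theorem~\ref{thm;main}, so nothing remains to be proved there. For $(1)\Rightarrow(2)$, I would observe that if $\cY\prec X$, then for each $Y\in\cY$ a $1$-Lipschitz measure-preserving map $f_Y:X\to Y$ witnesses $Y\prec_\ep X$ with non-exceptional domain $\tX = X$ for every $\ep>0$: the mass and Prohorov conditions in the definition of $\prec_\ep$ are trivially satisfied, and the $1$-Lipschitz inequality is stronger than the required $(1,\ep)$-Lipschitz one.

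The substantive direction is $(2)\Rightarrow(3)$. Since $(\cX,\Box)$ is complete, it suffices to prove that $\cY$ is totally bounded. Given $\delta>0$, I would apply hypothesis $(2)$ with $\ep:=\delta/8$ to produce an mm-space $X$ with $Y\prec_\ep X$ for every $Y\in\cY$. Then Lemma~\ref{lem;K} supplies, for each such $Y$, an mm-space $Z_Y$ satisfying $Z_Y\prec X$ --- equivalently, $Z_Y\in\cP_X$ --- and $\Box(Y,Z_Y)\le 4\ep=\delta/2$. Since the pyramid $\cP_X$ is $\Box$-compact, as recalled in Section~\ref{sec;pre}, it admits a finite $(\delta/2)$-net $\{Z_1,\dots,Z_N\}$, and the triangle inequality then places every $Y\in\cY$ within $\Box$-distance $\delta$ of some $Z_i$, showing that $\cY$ is totally bounded.

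I do not anticipate any real obstacle. The key observation --- and the only nontrivial ingredient --- is Lemma~\ref{lem;K}, which converts the approximate relation $Y\prec_\ep X$ into a genuine membership $Z_Y\in\cP_X$ at the cost of a controlled $\Box$-error; once that replacement is in hand, compactness of $\cP_X$ supplies total boundedness for free, and the cycle closes via Theorem~\ref{thm;main}.
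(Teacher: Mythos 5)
Your proposal is correct, and the cycle $(3)\Rightarrow(1)\Rightarrow(2)\Rightarrow(3)$ closes as you describe; the difference from the paper lies entirely in the implication $(2)\Rightarrow(3)$. The paper argues intrinsically: it takes a finite family $\cK$ of small Borel sets covering most of $\mux$, pushes it forward through the maps $f_Y$ to obtain families $\cK_Y$ of uniformly many, uniformly small sets covering most of $\muy$ with uniformly bounded total diameter, and then invokes the covering criterion of Lemma~\ref{lem;precpt}. You instead use Lemma~\ref{lem;K} to replace each $Y$ with $Y\prec_\ep X$ by some $Z_Y\in\cP_X$ with $\Box(Y,Z_Y)\le 4\ep$, and then extract a finite net from the $\Box$-compact set $\cP_X$. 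Both are valid. Your route is shorter and conceptually cleaner --- it reuses Lemma~\ref{lem;K}, which the paper already needs for Proposition~\ref{prop;indep}, and makes the role of the pyramid $\cP_X$ explicit --- but it leans on the compactness of $\cP_X$, a fact the paper only records as a remark with a citation to \cite[Chapter~6]{S} and never actually uses in its proofs; the standard proof of that compactness itself goes through a covering argument of the same flavour as Lemma~\ref{lem;precpt}, so the paper's version is the more self-contained and elementary of the two. One small stylistic point: since $(\cX,\Box)$ is complete, your reduction of precompactness to total boundedness is legitimate, but you should say explicitly (as you do) that the net points $Z_i$ need not lie in $\cY$, which is harmless for total boundedness.
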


\begin{proof}
The implication \eqref{item1} $\Rightarrow$ \eqref{item2} is trivial
and we proved the implication \eqref{item3} $\Rightarrow$ \eqref{item1} in Theorem~\ref{thm;main}.
We shall prove the implication \eqref{item2} $\Rightarrow$ \eqref{item3}.

For $\ep > 0$,
we let $X$ be as in \eqref{item2} and take a finite set $\cK$ of Borel sets of $X$ with
\[
\max_{K\in\cK} \diam K \le\ep \quad\text{ and }\quad
\mux(\bigcup\cK) \ge 1-\ep.
\]

By assumption,
any $Y\in\cY$ admits a Borel map $f_Y :X\to Y$ and a Borel set $\tX_Y \subset X$ with
\begin{enumerate}
\item $\mux(\tX_Y) \ge 1-\ep$,
\item $d_Y (f_Y(x), f_Y(x')) \le d_X( x, x') + \ep$ for any $x, x' \in \tX_Y$, and
\item $\proh((f_Y)_* \mux, \muy) \le \ep$.
\end{enumerate}

We note that the set $\cK_Y := \{U_\ep (f_Y (K\cap\tX_Y)) : K\in\cK \}$ satisfies
\begin{align*}
\#\cK_Y \le\#\cK, \qquad
\diam K <4\ep, \qquad
\diam\bigcup\cK_Y <\diam\bigcup\cK +3\ep <\infty,
\end{align*}
and
\begin{align*}
\muy(\bigcup\cK_Y) = \muy(U_\ep (f_Y (U_Y)))
\ge (f_Y)_* \mux (\overline{f_Y (U_Y)}) -\ep
\ge \mux(U_Y) -\ep \ge 1- 3\ep
\end{align*}
for any $Y\in\cY$ and $K\in\cK_Y$, where $U_Y :=\bigcup\cK\cap\tX_Y$.
Therefore Lemma \ref{lem;precpt} implies that $\cY$ is $\Box$-precompact
and this finishes the proof.
\end{proof}

\section{Proof of Proposition~\ref{prop;GH}}\label{sec;GH}

In this section, we make some preparations and prove Proposition~\ref{prop;GH}.
The facts which we need here also are found in \cite{S} and the references therein.

\begin{defn}[e.g.~{\cite[Definition~3.10]{S}}]
For metric spaces $X$ and $Y$ and $\ep>0$,
we call a map $f:X\to Y$ an \emph{$\ep$-isometry} if it satisfies
\begin{enumerate}
\item $|d_Y (f(x), f(x')) - d_X (x, x')| \le\ep$ for any $x, x' \in X$ and
\item $d_Y (y, f(X)) \le\ep$ for any $y\in Y$.
\end{enumerate}
\end{defn}

We know that the Gromov--Hausdorff distance $d_{GH}$ is complete on the set of all (isometry classes of) compact metric spaces,
e.g.~\cite[Lemma~3.9]{S}.
We need the following rather than its precise definition.
\begin{lem}[e.g.~{\cite[Lemma~3.11]{S}}]\label{lem;episomGH}
Let $X$ and $Y$ be compact metric spaces and $\ep>0$.
\begin{enumerate}
\item If $d_{GH} (X, Y)<\ep$, then there exists a $2\ep$-isometry $f:X\to Y$.
\item If there exists an $\ep$-isometry $f:X\to Y$, then $d_{GH} (X, Y)<2\ep$.
\end{enumerate}
\end{lem}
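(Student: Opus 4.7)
The plan is to unfold the definition of the Gromov--Hausdorff distance in terms of isometric embeddings of $X$ and $Y$ into a common ambient metric space with small Hausdorff distance between the images, and then translate back and forth between such ambient data and explicit $\ep$-isometries.

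For part~(1), suppose $d_{GH}(X, Y) < \ep$. I would fix a metric space $(W, d_W)$ together with isometric embeddings $i_X: X \hookrightarrow W$ and $i_Y: Y \hookrightarrow W$ whose images have Hausdorff distance in $W$ strictly less than $\ep$. For each $x \in X$, pick $f(x) \in Y$ with $d_W(i_X(x), i_Y(f(x))) < \ep$; two applications of the triangle inequality yield the distortion bound
\[
|d_Y(f(x), f(x')) - d_X(x, x')| \le d_W(i_X(x), i_Y(f(x))) + d_W(i_X(x'), i_Y(f(x'))) < 2\ep,
\]
and for each $y \in Y$, picking $x$ with $d_W(i_X(x), i_Y(y)) < \ep$ gives $d_Y(y, f(x)) < 2\ep$, so $f$ is a $2\ep$-isometry.

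For part~(2), given an $\ep$-isometry $f: X \to Y$, I would build an explicit metric $d_Z$ on the disjoint union $Z := X \sqcup Y$ that restricts to $d_X$ and $d_Y$ on each side and, for the cross terms, is defined by
\[
d_Z(x, y) := \inf_{x' \in X}\bigl( d_X(x, x') + d_Y(f(x'), y) \bigr) + \frac{\ep}{2}.
\]
Taking $x' = x$ in the infimum shows $d_Z(x, f(x)) \le \ep/2$ for every $x \in X$, and for $y \in Y$ using condition~(2) of the $\ep$-isometry to pick $x$ with $d_Y(y, f(x)) \le \ep$ gives $d_Z(x, y) \le 3\ep/2$. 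Therefore the Hausdorff distance between the copies of $X$ and $Y$ in $(Z, d_Z)$ is at most $3\ep/2$, which yields $d_{GH}(X, Y) \le 3\ep/2 < 2\ep$.

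The main obstacle is verifying that $d_Z$ is actually a metric, specifically the triangle inequality across the $X$--$Y$ divide; the only nontrivial case is $d_X(x, x') \le d_Z(x, y) + d_Z(y, x')$. Choosing approximate minimizers $x_1, x_2 \in X$ for the two infima, I would estimate $d_Y(f(x_1), y) + d_Y(f(x_2), y) \ge d_Y(f(x_1), f(x_2)) \ge d_X(x_1, x_2) - \ep$ via condition~(1) of the $\ep$-isometry, so that the deficit $\ep$ is exactly absorbed by the two additive $\ep/2$ terms. The constant $\ep/2$ is thus selected just large enough to close this triangle inequality while keeping the resulting Hausdorff bound $3\ep/2$ strictly below $2\ep$.
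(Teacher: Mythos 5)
The paper does not prove this lemma; it is quoted from \cite[Lemma~3.11]{S}, and your argument is essentially the standard proof found there. Part~(1) is correct as written. In part~(2), your gluing metric and the bookkeeping of the $\ep/2$ terms are right, but note that the case you single out is not quite ``the only nontrivial case'': the triangle inequality $d_Y(y,y')\le d_Z(y,x)+d_Z(x,y')$ across the divide also requires the distortion hypothesis, this time the upper bound $d_Y(f(x_1),f(x_2))\le d_X(x_1,x_2)+\ep$ for approximate minimizers $x_1,x_2$; it closes by the symmetric computation, with the deficit $\ep$ again absorbed by the two additive $\ep/2$ terms, so this is a minor omission rather than a gap.
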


\begin{defn}\label{def;precep}
Let $X$ be a compact metric space and  $\ep>0$.

For a compact metric space $Y$,
we write $Y\prec_\ep X$ if there exists a map $f:X\to Y$ with
\begin{enumerate}
\item $d_Y (f(x), f(x')) \le d_X (x, x') +\ep$ for any $x, x' \in X$ and
\item $d_Y (y, f(X)) \le\ep$ for any $y\in Y$.
\end{enumerate}

For a family $\cY$ of compact metric spaces, we write $\cY\prec_\ep X$ if $Y\prec_\ep X$ for any $Y\in\cY$.
\end{defn}

The following is an analog of Lemma~\ref{lem;key}.
\begin{lem}\label{lem;GHkey}
Let $\cY$ be a GH-precompact family of compact metric spaces and $\ep, \ep'>0$.
If a finite metric space $X$ satisfies $\cY\prec_\ep X$,
then there exists a finite metric space $X'$ with $d_{GH} (X, X')<6\ep$ and $\cY\prec_{\ep'} X'$.
\end{lem}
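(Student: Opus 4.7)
The plan is to construct $X'$ in two stages: first, thicken the metric on $X$ so that each $f_Y$ becomes $1$-Lipschitz, then adjoin finitely many new points to achieve $\ep'$-density of the relevant images. A preliminary step uses GH-precompactness to reduce to a finite family. Pick $\cY_0 \subset \cY$ finite and GH-$\delta$-dense, with $\delta := \ep'/8$; if I can construct $X'$ with $d_{GH}(X, X') < 6\ep$ and $\cY_0 \prec_{\delta'} X'$ for $\delta' := \ep'/2$, then for each $Y \in \cY$ I would take $Y_0 \in \cY_0$ with $d_{GH}(Y, Y_0) < \delta$, use Lemma~\ref{lem;episomGH} to pick a $2\delta$-isometry $\phi: Y_0 \to Y$, and set $g_Y := \phi \circ g_{Y_0}$; a routine triangle-inequality check shows that $g_Y$ is $(1, 2\delta)$-Lipschitz with image $(4\delta + \delta')$-dense in $Y$, and both $2\delta = \ep'/4$ and $4\delta + \delta' = \ep'$ are at most $\ep'$.

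For finite $\cY_0 = \{Y_1, \ldots, Y_k\}$, define the metric
\[
\tilde d_X(x, x') := d_X(x, x') \vee \max_{1 \le i \le k} d_{Y_i}(f_{Y_i}(x), f_{Y_i}(x'))
\]
on $X$; it satisfies $d_X \le \tilde d_X \le d_X + \ep$ and makes each $f_{Y_i}: (X, \tilde d_X) \to Y_i$ a $1$-Lipschitz map. For each $i$, fix a finite $\delta'$-net $N_i \subset Y_i$, and for each $y \in N_i$ choose an anchor $x_{y, i} \in X$ with $d_{Y_i}(f_{Y_i}(x_{y, i}), y) \le \ep$. Form $X'$ by adjoining to $(X, \tilde d_X)$ formal points $z_{y, i}$, and extend each $f_{Y_i}$ to $g_i: X' \to Y_i$ by $g_i(z_{y, i}) := y$ and $g_i(z_{y, j}) := f_{Y_i}(x_{y, j})$ for $j \ne i$. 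Define
\[
d_{X'}(u, v) := \tilde d_X(x(u), x(v)) \vee \max_{1 \le i \le k} d_{Y_i}(g_i(u), g_i(v)),
\]
where $x: X' \to X$ is the identity on $X$ and sends each $z_{y, i}$ to $x_{y, i}$. If distinct formal points happen to have $d_{X'}$-distance zero, pass to the quotient by that relation to obtain a genuine metric.

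The main obstacle is verifying the metric axioms for $d_{X'}$ and the GH distance bound $d_{GH}(X, X') < 6\ep$ at the same time. The triangle inequality follows cleanly because each of the two components ($\tilde d_X$ and each $d_{Y_i} \circ g_i$) satisfies its own triangle inequality and is bounded termwise by $d_{X'}$, and the binary $\vee$ preserves this. Each $g_i$ is then $1$-Lipschitz by construction with image containing $N_i$, hence $\delta'$-dense in $Y_i$, so $\cY_0 \prec_{\delta'} X'$. For the GH bound, observe that the inclusion $\iota: X \hookrightarrow X'$ is an $\ep$-isometry: its distortion is $|\tilde d_X - d_X| \le \ep$, and for each $z_{y, i}$ we have $d_{X'}(z_{y, i}, x_{y, i}) = d_{Y_i}(f_{Y_i}(x_{y, i}), y) \le \ep$ (the other terms in the $\vee \max$ vanish), so $d_{X'}(u, \iota(X)) \le \ep$ for all $u \in X'$. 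Lemma~\ref{lem;episomGH} then yields $d_{GH}(X, X') < 2\ep < 6\ep$, completing the proof.
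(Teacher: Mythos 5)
Your proof is correct, but your construction of $X'$ is genuinely different from the paper's. The paper likewise first passes to a finite GH-dense subfamily $\cY'\subset\cY$ and fixes finite $\ep'$-nets $N_Y\subset Y$, but it then blows up each point $x\in X$ into the cluster $\prod_{Y\in\cY'}N_Y(x)$ of tuples of net points lying within $\ep$ of $f_Y(x)$, takes $X'$ to be the disjoint union of these clusters with a max-type metric, and shows that the natural projection $X'\to X$ is a $3\ep$-isometry, which gives $d_{GH}(X,X')<6\ep$; the coordinate maps $\phi\mapsto\phi(Y)$ are then the required $1$-Lipschitz maps with $\ep'$-dense image. You instead keep $X$ itself, enlarge its metric by at most $\ep$ to make each $f_{Y_i}$ genuinely $1$-Lipschitz, and adjoin one new point per net point anchored at a chosen $\ep$-approximate preimage; the inclusion $X\hookrightarrow X'$ is then an $\ep$-isometry, so you get the sharper bound $d_{GH}(X,X')<2\ep$ and a much smaller space ($\#X+\sum_i\#N_i$ points instead of $\sum_{x\in X}\prod_{Y}\#N_Y(x)$). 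Your verification of the metric axioms via maxima of pulled-back pseudometrics followed by a metric quotient is sound, and your bookkeeping in the reduction step is in fact more careful than the paper's, whose proof as written ends with $\cY\prec_{5\ep'}X'$ rather than $\cY\prec_{\ep'}X'$. One tiny slip: if in the reduction paragraph you use only the hypothesis $\cY_0\prec_{\delta'}X'$, then $g_Y=\phi\circ g_{Y_0}$ is $(1,\delta'+2\delta)$-Lipschitz rather than $(1,2\delta)$-Lipschitz; since $\delta'+2\delta=3\ep'/4\le\ep'$, and since the $g_{Y_0}$ your construction actually produces is genuinely $1$-Lipschitz, this is harmless.
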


\begin{proof}
Since there is nothing to prove if $\ep\le\ep'$, we may assume that $\ep' <\ep$.
We suppose that $\cY\prec_\ep X$
and fix a map $f_Y :X\to Y$ as in Definition~\ref{def;precep} and a finite subset $N_Y \subset Y$ with $U_{\ep'}(N_Y) = Y$ for each $Y\in\cY$.
Since $\cY$ is GH-precompact,
there exists a finite subset $\cY' \subset\cY$ with $d_{GH} (Y, \cY')<\ep'$ for any $Y\in\cY$.

For each $x\in X$ and $Y\in\cY'$,
we put
\[
N_Y (x) := \{ y\in N_Y : d(f_Y (x), y) \le\ep \} \ne \emptyset.
\]
We define
\[
X'_x := \prod_{Y\in\cY'} N_Y (x) \text{ for } x\in X, \qquad X' := \bigsqcup_{x\in X} X'_x,
\]
and
\[
d_{X'} (\phi, \phi') := \max\left\{ \max_{Y\in\cY'} d_Y (\phi(Y), \phi'(Y)), \, d_X (x, x') \right\}
\]
for $\phi\in X'_x$ and $\phi' \in X'_{x'}$ with $x, x' \in X$.
Then $(X', d_{X'})$ is a finite metric space.

The natural surjection $\pi:X' \to X$ defined by $\pi(\phi)=x$ if $x\in X$ and $\phi\in X'_x$ satisfy
\[
d_X (x, x') \le d_Y (\phi(Y), \phi'(Y)) \le d_Y (f_Y(x), f_Y(x')) + 2\ep \le d_X (x, x') + 3\ep
\]
and Inequality~\eqref{ineq;3ep}
for any $\phi\in X'_x$ and $\phi' \in X'_{x'}$ with $x, x' \in X$ and $Y \in \cY'$.
This means that $\pi$ is a $3\ep$-isometry and Lemma~\ref{lem;episomGH} yields $d_{GH}(X, X')<6\ep$.

We fix $Y\in\cY'$ and consider the map
\[
g:X' \to Y, \qquad g(\phi) = \phi(Y).
\]
Then we have
\[
d_Y (g(\phi), g(\phi')) = d_Y(\phi(Y), \phi'(Y)) \le d_{X'} (\phi, \phi')
\]
for any $\phi, \phi' \in X'$.
If $y\in Y$, there exist $y' \in N_Y$ with $d_Y (y, y')<\ep'$ and $x\in X$ with $d_Y (y', f_Y (x))\le\ep$.
This means that $y' \in N_Y (x)$ and there exists $\phi\in X'_x \subset X'$ with $g(\phi)=y'$ and hence
\[
d_Y (y, g(X')) \le d_Y (y, y') < \ep'.
\]
These imply that $\cY'\prec_{\ep'} X'$.

Then $\cY\prec_{5\ep'} X'$ by Lemma~\ref{lem;episomGH}.
This completes the proof of Lemma~\ref{lem;GHkey}.
\end{proof}

The following is an analog of Lemma~\ref{lem;final},
cf.~\cite[Lemma~5.33]{S}.
\begin{lem}\label{lem;GHfinal}
Let $\seqn{\ep_n}$ be a sequence of positive numbers with $\ep_n \to 0$ as $\nti$.
If sequences $\seqn{X_n}$ and $\seqn{Y_n}$ of compact metric spaces GH-converge to compact metric spaces $X$ and $Y$ respectively and $Y_n \prec_{\ep_n} X_n$ for any $n$,
then $Y\prec X$.
\end{lem}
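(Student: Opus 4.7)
The plan is to produce a genuine $1$-Lipschitz surjection $g : X \to Y$ as a uniform limit of approximate ones, via Arzel\`a--Ascoli.

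First, apply Lemma~\ref{lem;episomGH} to obtain $2\delta_n$-isometries $\alpha_n : X \to X_n$ and $\beta_n : Y_n \to Y$ with $\delta_n \to 0$ as $\nti$. Let $f_n : X_n \to Y_n$ be a map witnessing $Y_n \prec_{\ep_n} X_n$ in the sense of Definition~\ref{def;precep}, and set
\[
g_n := \beta_n \circ f_n \circ \alpha_n : X \to Y.
\]
A direct composition computation yields
\[
d_Y(g_n(x), g_n(x')) \le d_X(x, x') + \ep_n + 4\delta_n
\]
for all $x, x' \in X$. For approximate surjectivity, chase a given $y \in Y$ successively through $\beta_n$ (gaining $2\delta_n$ since $\beta_n(Y_n)$ is $2\delta_n$-dense), through the $\ep_n$-surjectivity of $f_n$, through the fact that $\alpha_n(X)$ is $2\delta_n$-dense in $X_n$, then push back with the $(1,\ep_n + 2\delta_n)$-Lipschitz bound along $f_n$ and the $2\delta_n$-isometry property of $\beta_n$. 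This produces $x \in X$ with $d_Y(y, g_n(x)) \le 2\ep_n + 6\delta_n =: \eta_n$, and $\eta_n \to 0$.

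Next, I would extract a limit. The family $\{g_n\}_\niN$ is equicontinuous: given $\eta > 0$, pick $N$ with $\ep_N + 4\delta_N < \eta/2$, so that for $n \ge N$ the condition $d_X(x, x') < \eta/2$ forces $d_Y(g_n(x), g_n(x')) < \eta$; the finitely many $g_n$ with $n < N$ are individually continuous on the compact space $X$. Since $X$ and $Y$ are compact, the Arzel\`a--Ascoli theorem yields a subsequence $g_{n_k}$ converging uniformly to a continuous map $g : X \to Y$. Passing to the limit $k \to \infty$ in
\[
d_Y(g_{n_k}(x), g_{n_k}(x')) \le d_X(x, x') + \ep_{n_k} + 4\delta_{n_k}
\]
shows that $g$ is $1$-Lipschitz.

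Finally, I would verify surjectivity of $g$. Fix $y \in Y$ and choose $x_k \in X$ with $d_Y(y, g_{n_k}(x_k)) \le \eta_{n_k} \to 0$. By compactness of $X$, a further subsequence $x_{k_j}$ converges to some $x_\infty \in X$; uniform convergence $g_{n_k} \to g$ together with continuity of $g$ then gives $g_{n_{k_j}}(x_{k_j}) \to g(x_\infty)$, whence $y = g(x_\infty)$. Thus $g$ is a $1$-Lipschitz surjection, i.e.\ $Y \prec X$. The only mild obstacle is the bookkeeping that keeps the two small parameters $\ep_n$ and $\delta_n$ from proliferating under the composition $\beta_n \circ f_n \circ \alpha_n$, and the observation that the additive slack in the almost-Lipschitz estimate still permits equicontinuity because that slack itself tends to $0$ uniformly in $n$; everything else is routine compactness.
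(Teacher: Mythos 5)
Your overall strategy --- compose the witnesses $f_n$ with $\ep$-isometries to get almost-Lipschitz, almost-surjective maps $g_n : X\to Y$, extract a limit, and check that the limit is a $1$-Lipschitz surjection --- is exactly the paper's strategy, and your quantitative bookkeeping ($d_Y(g_n(x),g_n(x'))\le d_X(x,x')+\ep_n+4\delta_n$ and the $2\ep_n+6\delta_n$ almost-surjectivity) is correct. However, there is a genuine gap in the limit-extraction step. The maps $g_n$ are not continuous: an $\ep$-isometry is merely a set map satisfying metric inequalities with additive slack, and likewise the maps $f_n$ of Definition~\ref{def;precep} satisfy only $d_{Y_n}(f_n(z),f_n(z'))\le d_{X_n}(z,z')+\ep_n$, which permits jumps of size up to $\ep_n$ at arbitrarily small scales. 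So your assertion that ``the finitely many $g_n$ with $n<N$ are individually continuous'' is false, and in fact \emph{no} $g_n$ need be continuous; the family $\{g_n\}$ is therefore not equicontinuous in the sense required by the Arzel\`a--Ascoli theorem, and that theorem does not apply as stated. What you actually have is the weaker property that for every $\eta>0$ there exist $N$ and $\rho>0$ such that $d_X(x,x')<\rho$ implies $d_Y(g_n(x),g_n(x'))<\eta$ for all $n\ge N$.

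This is repairable, and the repair is precisely what the paper does: instead of invoking Arzel\`a--Ascoli, run a diagonal argument to get a subsequence along which $g_{n(k)}(x)$ converges for every $x$ in a countable dense set $D\subset X$ (and along which the approximate preimages $x_{y,n(k)}$ converge for $y$ in a countable dense set $E\subset Y$); the asymptotic $(1,\ep_{n(k)}+4\delta_{n(k)})$-Lipschitz bound shows the pointwise limit on $D$ is $1$-Lipschitz and hence extends uniquely to a $1$-Lipschitz map $f:X\to Y$, and surjectivity is then checked by the same chase you describe, using $D$-approximations of the limit points $x_y$ rather than uniform convergence. If you want to keep the Arzel\`a--Ascoli phrasing, you must either cite a version valid for uniformly almost-equicontinuous families of not-necessarily-continuous maps, or carry out the dense-subset argument explicitly; as written, the step producing the limit map $g$ does not go through.
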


\begin{proof}
Let $f_n : X_n \to Y_n$ be as in Definition~\ref{def;precep} with $\ep=\ep_n$.
Lemma~\ref{lem;episomGH} states that there exist $\delta_n$-isometries $\Phi_n : X\to X_n$ and $\Psi_n : Y_n \to Y$ with $\delta_n \to 0$ as $\nti$.
We put $g_n := \Psi_n \circ f_n \circ\Phi_n : X\to Y$ and
take countable dense sets $D\subset X$ and $E\subset Y$ and $x_{y, n} \in X$ with
\[
d_Y (g_n (x_{y, n}), y)\le 2\ep_n +4\delta_n \text{ for each } y\in E \text{ and } n.
\]
By a diagonal argument,
we take a subsequence $\{g_{n(k)}\}_{k=1}^\infty$ of
$\seqn{g_n}$ for which $\{g_{n(k)} (x)\}_{k=1}^n$ converges to some $y_x \in Y$ for any $x\in D$
and $\{x_{y, n(k)}\}_{k=1}^n$ converges to some $x_y \in X$ for any $y\in E$.
Then there exists a unique $1$-Lipschitz map $f:X\to Y$ with $f(x)=y_x$ for any $x\in D$.

If $y\in Y$, $\ep>0$, and $k$ is large enough, there exists $x\in D$ with $d_X (x_y, x)<\ep$ and
\begin{align*}
d_Y (f(x_y), y)
&\le d_Y (f(x_y), f(x)) +d_Y (f(x), g_{n(k)} (x)) \\
&\qquad +d_Y (g_{n(k)} (x), g_{n(k)} (x_{y, n(k)})) + d_Y (g_{n(k)} (x_{y, n(k)}), y) \\
&< 2d_X (x_y, x) + d_X (x_y, x_{y, n(k)}) +\ep <4\ep.
\end{align*}
This implies that $f(x_y)=y$ and $f$ is surjective.
This finishes the proof.
\end{proof}

Now we are ready to prove Proposition~\ref{prop;GH}.
\begin{proof}[Proof of Proposition~\ref{prop;GH}]
We suppose that $\cY$ is a GH-precompact family of compact metric spaces and
fix a sequence $\seqn{\ep_n}$ of positive numbers $\ep_n>0$ with $\sum_\niN \ep_n <\infty$.
Any compact metric space $X_0$ satisfies $\cY\prec_{\ep_0} X_0$ with $\ep_0 := \sup_{Y\in\cY} \diam Y <\infty$.
We use Lemma~\ref{lem;GHkey} to find compact metric spaces $X_n$
with $\cY\prec_{\ep_n} X_n$ and $d_{GH} (X_n, X_{n+1}) <6\ep_n$ for any $\niN$.
Since $\seqn{X_n}$ is Cauchy,
it GH-converges to some compact metric space $X$.
Then $\cY\prec X$ by Lemma~\ref{lem;GHfinal} and this finishes the proof.
\end{proof}

The following is an analog of Corollary~\ref{cor;main}.
\begin{cor}
For any family $\cY$ of compact metric spaces, the following are equivalent.
\begin{enumerate}
\item\label{item;GH1} There exists a compact metric space $X$ with $\cY\prec X$.
\item\label{item;GH2} For any $\ep>0$, there exists a compact metric space $X$ with $\cY\prec_\ep X$.
\item\label{item;GH3} $\cY$ is GH-precompact.
\end{enumerate}
\end{cor}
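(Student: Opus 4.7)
The plan is to close the cycle of implications, using the same pattern as Corollary \ref{cor;main}. The implication $(\ref{item;GH1}) \Rightarrow (\ref{item;GH2})$ is immediate: a $1$-Lipschitz surjection $f:X \to Y$ trivially satisfies both conditions of Definition \ref{def;precep} for every $\ep > 0$. The implication $(\ref{item;GH3}) \Rightarrow (\ref{item;GH1})$ is exactly Proposition \ref{prop;GH}, which we have just proved. Thus only the implication $(\ref{item;GH2}) \Rightarrow (\ref{item;GH3})$ requires a new argument.

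For this, I would invoke the standard characterization that a family of compact metric spaces is GH-precompact if and only if it is \emph{uniformly totally bounded}, meaning that diameters are uniformly bounded and for each $\ep > 0$ there is $N = N(\ep) \in \N$ such that every space in the family can be covered by $N$ balls of radius $\ep$. Given $\ep > 0$, apply $(\ref{item;GH2})$ to produce a compact metric space $X$ with $\cY \prec_\ep X$, and fix a finite $\ep$-net $\{x_1, \ldots, x_N\} \subset X$. For each $Y \in \cY$, let $f_Y : X \to Y$ be a map as in Definition \ref{def;precep}. Given any $y \in Y$, there is $x \in X$ with $d_Y(y, f_Y(x)) \le \ep$, and some index $i$ with $d_X(x, x_i) \le \ep$, so $d_Y(f_Y(x), f_Y(x_i)) \le d_X(x, x_i) + \ep \le 2\ep$ and hence $d_Y(y, f_Y(x_i)) \le 3\ep$. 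Thus $\{f_Y(x_1), \ldots, f_Y(x_N)\}$ is a $3\ep$-net of $Y$, and the same triangle inequality bounds $\diam Y \le \diam X + 3\ep$ uniformly in $Y \in \cY$. Letting $\ep$ vary yields GH-precompactness of $\cY$.

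There is no real obstacle: the substantive content has already been absorbed into Proposition \ref{prop;GH}, and the remaining direction is a short covering argument that mirrors the final step of the proof of Corollary \ref{cor;main}. The only point to be careful about is to state the uniform total boundedness characterization explicitly (or cite it from the references) so that the conclusion GH-precompactness is justified from the uniform $N(\ep)$ and the uniform diameter bound.
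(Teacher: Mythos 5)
Your proposal is correct and follows essentially the same route as the paper: the first two implications are handled identically, and for $(\ref{item;GH2}) \Rightarrow (\ref{item;GH3})$ the paper also pushes a finite $\ep$-net $N\subset X$ forward by the maps $f_Y$ to get uniformly bounded diameters and uniformly finite $\ep$-nets, concluding via the uniform total boundedness criterion (cited as \cite[Lemma~3.12]{S}). The only cosmetic difference is the constant (the paper asserts $d_Y(y, N_Y)\le 2\ep$ where your more careful accounting gives $3\ep$), which is immaterial.
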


\begin{proof}
The implications \eqref{item;GH1} $\Rightarrow$ \eqref{item;GH2} is trivial
and we proved the implication \eqref{item;GH3} $\Rightarrow$ \eqref{item;GH1} in Proposition~\ref{prop;GH}.
We shall prove the implication \eqref{item;GH2} $\Rightarrow$ \eqref{item;GH3}.

For $\ep > 0$, we let $X$ be as in \eqref{item;GH2} and take a finite subset $N\subset X$ with
\[
d_X (x, x')>\ep \quad\text{ and }\quad  d_X (x'', N)\le\ep
\]
for any $x, x' \in N$ with $x\ne x'$ and any $x'' \in X$.
By assumption, any $Y\in\cY$ admits a map $f_Y :X\to Y$ as in Definition~\ref{def;precep} and we put $N_Y := f_Y (N) \subset Y$.
Then we have
\[
\diam{Y} \le \diam{X}+3\ep, \quad \# N_Y \le\# N<\infty, \quad\text{ and }\quad d_Y (y, N_Y) \le 2\ep
\]
for any $Y\in\cY$ and $y\in Y$.
Therefore $\cY$ is GH-precompact by e.g.~\cite[Lemma~3.12]{S} and this finishes the proof.
\end{proof}

\begin{rem}
We wonder if there is any relation between Theorem~\ref{thm;main} and Proposition~\ref{prop;GH}.

A compact metric space $\cL_1 (X)$ is associated with an mm-space $X$,
e.g.~\cite[Definition~4.43]{S}.
If $\cY\subset\cX$ is a $\Box$-precompact set,
then $\{\cL_1 (Y) : Y\in\cY\}$ is GH-precompact,
cf.~\cite[Proposition~5.5, Lemma~7.7]{S}.

A pyramid $\cP$ is said to be \emph{concentrated} if $\{\cL_1 (X) : X\in\cP\}$ is GH-precompact, e.g.~\cite[Definition~7.9]{S}.
If $X_n := S^1 \times\dots\times S^n$ is the product of the unit spheres $S^n$ in $\R^{n+1}$,
then $\overline{\bigcup_{n=1}^\infty \cP_{X_n}}$ is a concentrated pyramid which is not $\Box$-precompact,
e.g.~\cite[Corollary~7.10]{S},
cf.~\cite[Remark~13]{OY}.
\end{rem}

\subsection*{Acknowledgements}
The authors thank Professor Takashi Shioya for his comments.
The first author was partly supported by JSPS KAKENHI (No.20J00147)
and the second author was partly supported by JSPS KAKENHI (No.18K03298).

\end{document}